\theoremstyle{plain}
\newtheorem{theorem}[equation]{Theorem}
\newtheorem{lemma}[equation]{Lemma}
\newtheorem{proposition}[equation]{Proposition}
\newtheorem{conjecture}[equation]{Conjecture}
\theoremstyle{definition}
\newtheorem{example}[equation]{Example}
\newtheorem{known}[equation]{Known results}
\def\C{\mathbb C}
\def\Q{\mathbb Q}
\def\R{\mathbb R}
\def\Z{\mathbb Z}
\newcommand{\labelpar}{\label}
\newcommand{\m}{\mathfrak{m}}
\def\?{{\bf ???}}
\newcommand{\tX}{\tilde{X}}
\newcommand{\cO}{{\mathcal O}}
\def\Z{{\mathbb Z}}
\def\R{{\mathbb R}}
\def\fa{\mathfrak{a}}
\def\fm{\mathfrak{m}}
\def\bu{{\bf u}}
\def\bv{{\bf v}}
\def\bx{{\bf x}}
\def\r{\rho}
\def\^{\widehat}
\def\({\left(}
\def\){\right)}
\DeclareMathOperator{\Vol} {Vol}
\DeclareMathOperator{\ini} {in}
\title{Durfee's conjecture on the signature of \\ smoothings of surface singularities}
\author{J\'anos Koll\'ar and Andr\'as N\'emethi\\
{\lowercase{ with an appendix by}} \\
Tommaso de Fernex}
\date{}
\begin{document}
\begin{abstract} In 1978 Durfee conjectured
various   inequalities
between the signature $\sigma$ and the geometric genus $p_g$ of a normal
surface singularity. Since then a few counter examples have been found
and positive results established in some special cases.

We prove a `strong' Durfee--type
inequality for any smoothing of a Gorenstein  singularity,
provided  that the  intersection form the resolution is unimodular,
and the conjectured `weak' inequality for  all hypersurface singularities
and for sufficiently large
multiplicity strict complete intersections.
The proofs   establish general
inequalities valid for any normal surface singularity.
\end{abstract}

\maketitle


\pagestyle{myheadings} \markboth{{\normalsize
J. Koll\'ar and A. N\'emethi}}{ {\normalsize Durfee's Conjecture}}

\section{Introduction}\labelpar{s:i}

{\bf Durfee's conjectures.}
Let $(X,0)$ be a complex analytic normal surface singularity
and  $\tX\to X$  a resolution. The {\it geometric genus}
 $p_g$ is defined as
$h^1(\cO_{\tX})$. For any one--parameter smoothing with generic (Milnor)
 fiber $F$, the rank of
the second homology  $H_2(F,\Z)$ is the {\it Milnor number}  of the smoothing
$\mu$. Furthermore,  $H_2(F,\Z)$
  has a natural intersection form with Sylvester invariants
 $(\mu_+,\mu_0,\mu_-)$. Then $\mu=\mu_++\mu_0+\mu_-$ and  $\sigma:=\mu_+-\mu_-$
 is called the {\it signature} of the smoothing.
The Milnor number and the signature  usually depend on the
 choice of the smoothing.
For more details  see the monographs \cite{AGV,AGLV,Looijenga-book,Milnor-book}
or \cite{Laufer1977,Looijenga1986,Wahl1981}.
Formulas for various classes of singularities can be found in
\cite{Greuel1975,Greuel-Hamm1978,Hamm1986,Hamm2011,Kerner-Nemethi.a,Kerner-Nemethi.b,Khovanskii1978,Morales1985}.

These local invariants should be viewed as analogs of the
most important global invariants: Todd genus,  Euler number and signature.

Durfee proved that    $2p_g=\mu_0+\mu_+$ \cite{Durfee1978} and
 $\mu_0$ equals  the first Betti number $b_1(L_X)$ of the link $L_X$ of $(X,0)$.

Examples show that for  a  surface singularity $\mu_-$ is
usually large compared to the
other Sylvester invariants. Equivalently, $p_g$ is essentially  smaller  than
 $\mu$ and $\sigma$ tends to be rather negative.
These observations led to the formulation of  Durfee's Conjectures
  \cite{Durfee1978}.

\vspace{2mm}

\noindent \
 {\bf Strong inequality}: If $(X,0)$ is an isolated complete intersection surface singularity (ICIS)
 then $6p_g\leq \mu$.\\
 \ {\bf Weak inequality}: If $(X,0)$ is a normal surface singularity, then for any smoothing
$4p_g\leq \mu+\mu_0 $. Equivalently, $\sigma\leq 0$.\\
\ {\bf Semicontinuity of $\sigma$}: If $\{(X_t,0)\}_{t\in (\C,0)}$ is a flat
family of
isolated surface singularities then $\sigma(X_{t=0})\leq \sigma(X_{t\not=0})$.
\vspace{2mm}

Other  invariants  are
provided by the combinatorics of a resolution $\pi: \tX\to X$.
Let $s$ denote the number
of irreducible $\pi$-exceptional curves  and $K$ the canonical
class of
$\tX$. Then $K^2+s$ is independent of the resolution and, for Gorenstein singularities,
\begin{equation}\label{eq:Laufer}
\mu=12p_g+K^2+s-\mu_0\quad\mbox{and}\quad -\sigma=8p_g+K^2+s;
\end{equation}
see \cite{Durfee1978,Laufer1977,Steenbrink1983,Wahl1981}.
Therefore, an inequality of type $\mu+\mu_0\geq C\cdot p_g$ (for some constant $C$)
transforms into $(12-C)p_g+K^2+s\geq 0$, or $-\sigma\geq (C-4)p_g$.

The resolution defines the
 {\it minimal cycle} $Z_{min}$
(also called the {\it Artin} or {\it fundamental} cycle) and the
 {\it maximal cycle}  $Z_{max}$.
The former is the smallest integral effective cycle  $\sum e_iE_i$
such that $(E_j,\sum e_iE_i)\leq 0$ for every $\pi$-exceptional curve
$E_j\subset \tX$ and the latter is the divisor corresponding to the ideal sheaf
$\pi^{-1}\m_{X,0}\cdot \cO_{\tX}$. It is clear that $Z_{min}\leq Z_{max}$.

Other invariants of $(X,0)$ are
the {\it multiplicity,} denoted by  $\nu$,
and  the {\it embedding dimension,} denoted by $e$.

 \begin{known}\label{1.4}
A counterexample to the {\it weak inequality}  was given by Wahl
\cite[page 240]{Wahl1981}; it is a minimally  elliptic
normal surface singularity (not ICIS) with  $\nu= 12$,
$\mu=3$, $\mu_0=0$, $p_g=1$ and $\sigma=1$.
Nevertheless,  both the strong and the weak inequalities hold
in most examples and
the intrinsic structure
responsible for the positivity/negativity of the signature of a given germ
has not been understood.

A counterexample to the {\it semicontinuity} of the signature was found in
\cite{Kerner-Nemethi2009};
 this excludes  degeneration arguments in
  possible proofs of the  inequalities.

The articles \cite{Kerner-Nemethi.a,Kerner-Nemethi.b}
 show that the {\it strong inequality} also fails
for some   non--hypersurface
 ICIS, and without other restrictions
the best that we can expect is the  weak inequality.

For hypersurfaces we have the following `positive' results:

\vspace{2mm}

$8p_g<\mu$ for $(X,0)$ of multiplicity 2, Tomari \cite{Tomari93},

$6p_g\le\mu-2$ for $(X,0)$ of multiplicity 3, Ashikaga \cite{Ashikaga92},

$6p_g\le\mu-\nu+1$ for quasi-homogeneous singularities, Xu--Yau \cite{Xu-Yau1993},

$6p_g\le\mu$ for suspension singularities $\{g(x,y)+z^k=0\}$, N\'emethi \cite{Nemethi98,Nemethi99},

$6p_g\le\mu$ for absolutely isolated singularities, Melle--Hern\'andez \cite{Melle00}.

For a short proof of $\sigma\leq 0$ in the suspension case see \cite{Nemethi1998}.
\end{known}


In this note we  estimate the expression $8p_g+K^2+s$
using  properties of the dual graph of the minimal  resolution.
For smoothable  Gorenstein singularities we obtain the following.

\begin{theorem}\label{main.thm}
  Let $(X,0)$ be a normal  Gorenstein  surface singularity
with embedding dimension $e$ and geometric genus $p_g$.
Let $\sigma$ denote the signature of a smoothing. Then
\begin{enumerate}
\item  If  the resolution intersection form is unimodular then
$-\sigma\geq 2^{4-e}(p_g+1)$.
\item If $(X,0)$ is a (non smooth) hypersurface  singularity then $-\sigma\geq 1+\mu_0$.
\end{enumerate}
\end{theorem}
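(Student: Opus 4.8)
The plan is to estimate $-\sigma = 8p_g + K^2 + s$ from below by using the formula \eqref{eq:Laufer} together with structural facts about the minimal resolution $\pi : \tX \to X$. The key point is that for a Gorenstein singularity the canonical cycle $Z_K$ (the anti-canonical cycle, defined by $(E_j, Z_K + K) = 0$ for all $j$) is an effective integral cycle, and $K^2 + s = -Z_K^2 + $ (a correction term controlled by the multiplicities and self-intersections appearing in the dual graph). I would first express $8p_g + K^2 + s$ in terms of lattice-theoretic data of the intersection matrix $I = (E_i, E_j)$ and the vectors $Z_{min}$, $Z_{max}$, $Z_K$. The two parts of the theorem then correspond to two different ways of bounding this quantity: part (1) exploits unimodularity of $I$, part (2) exploits the sharper numerical constraints available for hypersurfaces (in particular the bound $e \le \nu + 1$, Laufer's characterization, and the behaviour of the multiplicity cycle).

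\medskip
\noindent\textbf{Part (1).} Here I would argue that when $\det I = \pm 1$, the link $L_X$ is a homology sphere, hence $\mu_0 = b_1(L_X) = 0$ and the smoothing fiber $F$ has unimodular intersection form with $\mu_+ = 2p_g$. The inequality $-\sigma \ge 2^{4-e}(p_g+1)$ should follow by bounding the embedding dimension $e$ against the number of "independent directions" forced by a large minimal cycle: a unimodular negative-definite lattice of the type arising from a resolution graph cannot be too small relative to $p_g$. Concretely I expect to use that $e \le \dim (\m/\m^2) $ is controlled by $-Z_{max}^2$ and that, via Riemann--Roch on $\tX$, $p_g$ is bounded by a polynomial in the entries of $Z_K$; combining with unimodularity (which forces the sublattice spanned by the components to have "full" discriminant $1$, so $K^2 + s$ cannot be very negative for fixed $p_g$ without $e$ growing) yields the stated exponential-in-$e$ lower bound. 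The factor $2^{4-e}$ strongly suggests an inductive halving argument: blowing up or splitting off a component of the graph decreases the relevant quantity by at most a factor of $2$ while decreasing $e$ by $1$, with the base case $e \le 3$ (complete intersection of codimension $\le 1$) handled by the classical Durfee/Laufer inequalities or a direct check.

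\medskip
\noindent\textbf{Part (2).} For a hypersurface singularity one has the Milnor fiber with a distinguished algebraic structure, $\mu_0 = b_1(L_X)$, and $2p_g = \mu_0 + \mu_+$, so $-\sigma = \mu - 2\mu_+ - \mu_0 = \mu + \mu_0 - 4p_g$; the claim $-\sigma \ge 1 + \mu_0$ is therefore equivalent to $\mu \ge 4p_g + 1$, i.e. to the weak Durfee inequality in the \emph{strict} form $4p_g < \mu$ for hypersurfaces. I would prove this by combining \eqref{eq:Laufer} (giving $\mu + \mu_0 - 4p_g = 8p_g + K^2 + s = -\sigma$) with the general surface-singularity inequality that the paper promises to establish — namely a lower bound for $8p_g + K^2 + s$ in terms of graph data — and then showing that for hypersurfaces this lower bound is always at least $1 + \mu_0$. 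The extra input specific to hypersurfaces is that the generic hyperplane section is a plane curve singularity, which pins down $Z_{max}$ and forces $-Z_{max}^2 = \nu$ and $e = \nu + 1$ only in the extremal cases; more usefully, it gives strong control of the difference $Z_{max} - Z_{min}$ and of $p_g$ via the semicontinuity of $\delta$-type invariants along the generic section. The strict ``$+1$'' should come from the fact that $\cO_{\tX} \to \cO_{Z_K}$ is not an isomorphism unless $(X,0)$ is rational, but a Gorenstein rational singularity is an $A$--$D$--$E$ point, for which $p_g = 0$, $\mu_0 = 0$ and $-\sigma = \mu \ge 1$ directly.

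\medskip
\noindent\textbf{Main obstacle.} The hard part will be establishing the clean general lower bound for $8p_g + K^2 + s$ purely in terms of the minimal resolution graph, with explicit enough constants to extract both the exponential factor $2^{4-e}$ in (1) and the exact additive constant $1 + \mu_0$ in (2). Bounding $p_g$ from above by resolution-graph data is delicate because $p_g$ is not topological; one must use the Gorenstein hypothesis (existence of the integral canonical cycle $Z_K$ and the vanishing theorems for $\cO_{\tX}(-Z_K)$) to reduce $p_g$ to the computable quantity $h^0(\cO_{Z_K})$ or to $\chi(-Z_K)$-type expressions, and then feed that into an optimization over cycles satisfying $(E_j, Z) \le 0$. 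Getting the base cases and the inductive step of the halving argument to match the exponential constant exactly, rather than losing a factor somewhere, is where the real care is needed.
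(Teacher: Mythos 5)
Your reduction to bounding $8p_g+K^2+s$ from below is the right starting point, but both parts of your plan are missing the ideas that actually carry the proof, and one step is wrong. For part (1): unimodularity of the resolution lattice does \emph{not} make the link a homology sphere --- by Mumford it only forces $H_1(L_X,\Z)$ to be torsion free, so $\mu_0=b_1(L_X)$ need not vanish. The way unimodularity actually enters is through Elkies's theorem: a negative definite unimodular lattice of rank $s$ contains a characteristic vector $\xi$ with $\xi^2+s\geq 0$. Since $Z_K$ is also characteristic, $l:=\tfrac12(Z_K-\xi)\in L$ and $(K^2+s)/8=\chi(l)+(\xi^2+s)/8\geq\chi(l)\geq\min\chi$. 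The exponential constant $2^{4-e}$ then comes not from any inductive ``halving'' on the resolution graph (which could not see $e$, an analytic invariant) but from an algebraic H\"older-type inequality on colengths of ideals: $2^{e-1}\bigl(\ell(R/\fa_1)+\ell(R/\fa_2)\bigr)\geq\ell(R/\fa_1\fa_2)$, applied to the ideals $H^0(\cO_{\tX}(-m))$ and $H^0(\cO_{\tX}(-\bar m))$ with $m+\bar m=Z_K$, combined with Grauert--Riemenschneider vanishing (which gives $h^0(\cO_{Z_K})=p_g$) and Gorenstein duality (which gives $h^1(\cO_m)=h^0(\cO_m(-\bar m))$ and saves one factor of $2$). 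This yields $p_g+\min\chi\geq 2^{1-e}(p_g+1)$, hence $8p_g+K^2+s\geq 2^{4-e}(p_g+1)$. Without both Elkies and the colength inequality there is no route to the stated constant; neither appears in your sketch.

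For part (2): your equivalence $-\sigma\geq 1+\mu_0\iff \mu\geq 4p_g+1$ is correct, but your plan defers the substance to ``the general lower bound the paper promises,'' which is circular. The actual argument applies the same colength/duality estimate to the maximal cycle $Z=Z_{max}$ to get $p_g+\chi(Z)\geq\tfrac14(p_g+1)$, uses $Z^2\geq-\nu$, and arrives at $-\sigma\geq\tfrac12(p_g+1)+1+\mu_0-\nu$. The term $-\nu$ must then be beaten by the semicontinuity bound $p_g\geq\tbinom{\nu}{3}$ (degenerating to a homogeneous singularity), which works only for $\nu\geq 5$; the cases $\nu=2,3$ are quoted from Tomari and Ashikaga, and $\nu=4$ needs a separate integrality trick ($p_g+\chi(Z)\geq\tfrac54$ forces $p_g+\chi(Z)\geq 2$). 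Your proposal contains none of this case analysis, and the mechanism you suggest for the ``$+1$'' (non-rationality of $\cO_{\tX}\to\cO_{Z_K}$) is not where it comes from.
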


We prove  several inequalities that hold
without the  Gorenstein assumption. At each step we `loose
something'. Analyzing these steps should lead to better
estimates in many cases.
 Our aim is  not to  over-exploit these technicalities,
but to show conceptually the
general principles behind the inequalities.

It seems that $-\sigma\geq 0$ for all `sufficiently complicated'
complete intersections, but we can prove this only
for {\it strict complete intersection} singularities where
a local ring $({\mathcal O}_{X,0},\m_{X,0})$ is called a
 strict complete intersection iff
the corresponding graded ring ${\rm Gr}_{\m_{X,0}}({\mathcal O}_{X,0})$
is a complete intersection; see \cite{bennett}.

\begin{proposition} Fix $e$ and consider
the set of  strict  ICIS of embedding dimension $e$. Then
$-\sigma$ tends to infinity whenever
the multiplicity $\nu$ tends to infinity.
\end{proposition}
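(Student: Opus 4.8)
The plan is to reduce the statement to a growth estimate for the quantity $8p_g+K^2+s$ in terms of the multiplicity $\nu$, using the strict complete intersection hypothesis to compute all the relevant invariants from the generic initial ideal. First I would recall that for a strict ICIS the tangent cone $\mathrm{Spec}\,{\rm Gr}_{\m_{X,0}}({\mathcal O}_{X,0})$ is a complete intersection in $\mathbb{A}^e$ cut out by $e-2$ homogeneous polynomials of degrees $d_1\le\cdots\le d_{e-2}$; the multiplicity is $\nu=d_1\cdots d_{e-2}$. The key point is that $p_g$, $K^2$, $s$ and $\mu_0$ of $(X,0)$ can be read off (or bounded below) from the projective complete intersection curve $C\subset\mathbb{P}^{e-1}$ defined by the same equations, because blowing up the maximal ideal relates the resolution of $(X,0)$ to a resolution of the cone over $C$, and $\mathrm{Gr}_{\m}$ being a complete intersection forces the associated graded pieces $\m^k/\m^{k+1}$ to have the Hilbert function of that complete intersection. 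In particular I expect the estimate $p_g\ge h^1(\mathcal{O}_C)=g(C)$ together with a control on the negative part of the signature coming from the cone point's contribution.

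The main step is then a Durfee-type inequality for the cone over $C$, or more directly a lower bound on $-\sigma$ in terms of the Hilbert polynomial of the complete intersection. I would invoke the general inequalities proved earlier in the paper (the ones valid for arbitrary normal surface singularities, from which Theorem~\ref{main.thm} is deduced) applied to a resolution of $(X,0)$ obtained by blowing up $\m_{X,0}$: the exceptional set of the first blow-up is precisely $C$, so the numerical data $Z_{max}$, the number of components $s$, and the self-intersections are governed by $C\subset\mathbb{P}^{e-1}$ and by $\nu=Z_{max}^2$-type quantities. Combining the paper's inequality $-\sigma\ge (\text{something})\cdot p_g+(\text{correction})$ with the classical fact that for a degree-$\nu$ complete intersection curve in $\mathbb{P}^{e-1}$ the genus grows like a positive power of $\nu$ (for fixed $e$, since $\nu=\prod d_i$ with $e-2$ factors forces some $d_i\to\infty$, and $g(C)$ is a polynomial in the $d_i$ with positive leading behaviour), one gets $p_g\to\infty$ and hence $-\sigma\to\infty$.

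I would carry out the steps in this order: (i) fix $e$ and write $\nu=d_1\cdots d_{e-2}$ with $d_1\le\cdots\le d_{e-2}$, note $d_{e-2}\ge\nu^{1/(e-2)}\to\infty$; (ii) identify the blow-up of $\m_{X,0}$ and its exceptional divisor with the complete intersection curve $C$ of multidegree $(d_1,\dots,d_{e-2})$, using the strict complete intersection hypothesis to ensure the tangent cone is reduced enough that this blow-up is well-behaved; (iii) bound $p_g$ below by $h^1(\mathcal{O}_C)$, or more crudely by the Euler-characteristic defect forced by the Hilbert function of a complete intersection of that multidegree, and check this quantity $\to\infty$ as $\nu\to\infty$ with $e$ fixed; (iv) feed this into the general inequality of the form $-\sigma\ge c_e\, p_g - (\text{bounded})$ established earlier, with $c_e>0$ depending only on $e$ (coming from $2^{4-e}$ in part (1) of the theorem or its proof), to conclude.

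The hard part will be step (ii)–(iii): making precise how much of $p_g$ (equivalently, of $K^2+s$) is forced by the combinatorics of the strict complete intersection, since $p_g$ of $(X,0)$ can in principle be smaller than the genus of the projectivized tangent cone if the higher-order terms conspire, and one must show the strict hypothesis prevents this collapse. Concretely, I expect to need a semicontinuity or monotonicity statement saying $p_g$ of $(X,0)$ is at least $p_g$ of its tangent cone singularity (which holds because ${\rm Gr}_{\m}$ being a complete intersection makes the tangent cone a flat degeneration with the same $p_g$), and then a direct computation that the tangent cone, being the affine cone over $C\subset\mathbb{P}^{e-1}$, has $p_g=\sum_{k\ge 1}h^1(\mathcal{O}_C(k))$, which is a strictly increasing unbounded function of the multidegree for $e$ fixed. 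Granting that, the rest is bookkeeping with the inequalities already in hand.
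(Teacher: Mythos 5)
Your overall strategy matches the paper's: use the strict complete intersection hypothesis to reduce to the homogeneous (tangent cone) case, where $p_g$ is computable, and feed a lower bound on $p_g$ into one of the general inequalities of the paper. The paper does this very compactly: by Bennett's theorem every strict ICIS is a normally flat deformation of a homogeneous ICIS of the same multiplicity, $p_g$ is semicontinuous under such deformations, and for a homogeneous ICIS of multidegree $(d_1,\dots,d_r)$ the explicit formula (\ref{eq:jan1}) shows $p_g/\nu\to\infty$ as $\nu=\prod d_i\to\infty$ with $e=r+2$ fixed. Your steps (ii)--(iii), with the blow-up of $\m_{X,0}$ and the curve $C$, are essentially a hands-on version of this reduction, and your worry about ``collapse'' of $p_g$ is exactly what Bennett's normal flatness result is invoked to rule out.

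There is, however, one genuine gap in step (iv). You propose to conclude from an inequality of the form $-\sigma\geq c_e\,p_g-(\text{bounded})$ with $c_e=2^{4-e}$ coming from part (1) of Theorem \ref{main.thm}. That inequality is only proved under the hypothesis that the resolution intersection form is \emph{unimodular} (it rests on Elkies's theorem for unimodular lattices), and a general strict ICIS has no reason to satisfy this --- already the link of a homogeneous hypersurface singularity usually has torsion in $H_1$. The inequality that is actually available without unimodularity is Lemma \ref{eq:gen7}, which reads $\Sigma\geq C_1(e)\,p_g-C_2\,\nu+C_3$: the correction term is $-C_2\nu$, which is \emph{not} bounded as $\nu\to\infty$. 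Consequently it does not suffice to show $p_g\to\infty$; you must show $p_g/\nu\to\infty$. Your own estimate does in fact deliver this --- for a complete intersection curve $C\subset\P^{e-1}$ of multidegree $(d_1,\dots,d_{e-2})$ one has $2g(C)-2=\nu\bigl(\sum d_i-e\bigr)$, and with $e$ fixed some $d_i\geq\nu^{1/(e-2)}\to\infty$, so $g(C)/\nu\to\infty$ --- but as written your argument cites the wrong inequality and does not record that the stronger ratio statement is what is needed. Replacing part (1) of the theorem by Lemma \ref{eq:gen7} and checking $p_g/\nu\to\infty$ (rather than $p_g\to\infty$) closes the gap and recovers the paper's proof.
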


\begin{example}\label{ex:hom}\cite{Kerner-Nemethi.a,Kerner-Nemethi.b}
 Assume that $(X,0)$ is a homogeneous ICIS of codimension $r=e-2$ and
multidegree  $(d,\ldots, d)$.
Then $\nu=d^r$ and
$$\frac{p_g}{\nu}=\frac{r(d-1)(d-2)}{6}+\frac{r(r-1)(d-1)^2}{8}; \ \
\frac{\mu+1-\nu}{\nu}=r(d^2-3d+2)+\frac{r(r-1)(d-1)^2}{2}.$$

\vspace{1mm}

(a) \ If $r=1$ then $6p_g = \mu+1-\nu$.

(b) If $r\geq 2$ is fixed then $\frac{\mu}{p_g}$ asymptotically
 tends to $C_{2,r}:=\frac{4(r+1)}{r+1/3}$, although
  $C_{2,r}\cdot  p_g\leq \mu+1$ does not hold in general.
(The constant 4 is the best  bound valid for any $d$.)

(c) For any $r$ the inequality $4p_g\leq \mu+1-\nu$ is valid. In fact, for any fixed $d$
$$4\cdot\frac{(d-1)(r-1)+2(d-2)} {(d-1) (r-1)+\frac{4}{3}(d-2)}\cdot p_g\leq \mu+1-\nu.$$
For $d=2$ the coefficient of $p_g$ is 4, this coefficient is increasing if $d$ is increasing.
\end{example}

  {\bf Acknowledgments.}
 We thank M.~Musta\c{t}\u{a}  for useful  suggestions.
Partial financial support  to JK  was provided  by  the NSF under grant numbers
DMS-07-58275 and DMS-13-62960.
Partial financial support  to AN  was provided  by OTKA Grants 81203 and 100796.
This paper was written  while AN  visited Princeton University.

\section{Minimal Euler characteristic of  a resolution}\label{s:2}

Let $(X,0)$ be a normal surface singularity with
 minimal resolution $\tX\to X$. We write $L=H_2(\tX,\Z)$,
$(\cdot, \cdot)$ denotes  the intersection form
on $L$ and  $L'$ is the dual lattice ${\rm Hom}_\Z(L,\Z)$ with
natural inclusions $L\subset L'\subset L\otimes \Q$.

Let $Z_K\in L'$ be the anticanonical cycle, that is,
$(Z_K, E_i)=-(K, E_i)$ for every exceptional curve $E_i$.
 By  the minimality of the resolution
$(Z_K,l)\leq 0$ for any effective rational cycle $l$ and $Z_K\geq 0$.
A singularity is called {\it numerically Gorenstein}
if $Z_K\in L$.

Set $\chi(l')=-(l',l'-Z_K)/2$ for any $l'\in L\otimes\Q$.
By Riemann--Roch and the adjunction formula,
  $\chi(l)=\chi(\cO_l)$
for any effective cycle  $l\in L$.
We set
$$
\min\chi:=\min_{l\in L}\chi(l).
$$
 It is a topological invariant of $(X,0)$,
strongly related to arithmetical properties of the lattice $L$.
Usually it is hard to compute explicitly.
In the literature $1-\min\chi=p_a$ is called the {\it arithmetic genus } of $(X,0)$ \cite{Wael}.

(The expression $\min\chi$ is also the normalization term of the
Seiberg--Witten invariant of the link
expressed in terms of the lattice cohomology \cite{NJEMS}. The
comparison of $\min\chi$
with the $d$--invariant of the link provided by the Heegaard--Floer theory
and the
involved topological inequalities lead the authors to the ideas of the
present note.)

The quantity $\min\chi$   satisfies two obvious
inequalities. Since $h^0(\cO_l)-h^1(\cO_l)\geq 1-p_g$ we get
$\min\chi\geq 1-p_g$. Also, since the real quadratic function
$\chi(x)=-(x,x-Z_K)/2$ has its minimum
at $Z_K/2$, and $\chi(Z_K/2)=K^2/8$, we get that $\min\chi\geq K^2/8$.

We wish to understand how sharp these
inequalities are. The first inequality $\min\chi\geq 1-p_g$ will be improved to
 $\min\chi\geq -Cp_g$ for a certain constant  $0<C<1$.
This will be applied in the form $p_g+\chi(l)\geq (1-C)p_g$ for any $l$.

On the other hand, we wish to bound the difference $\min\chi-K^2/8$
from above. The strategy is the following.
Assume that for some rational cycle $\xi$ one has  $Z_K-\xi=2l\in 2L$.
Then $\chi(l)=(K^2-\xi^2)/8$,
hence $\chi(l)-K^2/8$ is minimal  exactly when $-\xi^2/8$ is minimal
among the rational cycles $\xi$ satisfying $Z_K-\xi\in 2L$.
Thus the existence of a cycle $\xi$ with $\xi^2+s\geq 0$  implies
that  $(K^2+s)/8\geq \min \chi$,
which combined with the first inequality gives  $p_g+(K^2+s)/8\geq (1-C)p_g$.

\begin{lemma}\label{lem:comb1} Let $(X,0)$ be a
numerically Gorenstein singularity. Then
$\min\chi$ is achieved by a cycle $l\in L$ satisfying $ Z_K/2\leq l\leq Z_K$.
\end{lemma}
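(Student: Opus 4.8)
The plan is to exploit the strict concavity of the quadratic function $\chi$ together with the sign conditions imposed by minimality of the resolution. Recall $\chi(l) = -(l, l - Z_K)/2$, which as a real-valued function has its unique maximum... wait, minimum region behavior: since the intersection form is negative definite, $-(x,x)$ is positive definite, so $\chi(x) = -(x,x-Z_K)/2$ is a \emph{concave} function of $x \in L\otimes\R$ with a unique \emph{maximum} at $x = Z_K/2$. Hence on the lattice $L$ the minimum of $\chi$ is attained, and we want to show some minimizer lies in the box $[Z_K/2, Z_K]$.

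First I would pick any minimizer $l_0 \in L$ of $\chi$ over $L$ and show we may push it into the region $l \geq Z_K/2$. The key inequality is the standard one: for any effective cycle decomposition, $\chi(a+b) = \chi(a) + \chi(b) - (a,b)$. More useful here is the "Laufer-type" comparison: if $E_i$ is an exceptional curve with $(l, E_i) > 0$, i.e. $(l - Z_K/2, E_i) > (-Z_K/2 + Z_K, E_i)/\dots$ — let me instead use the clean version. For a reduced component $E_i$, $\chi(l + E_i) - \chi(l) = -(l, E_i) - \chi_{top}$-type term; precisely $\chi(l+E_i) = \chi(l) - (l,E_i) + \chi(E_i)$ and $\chi(E_i) = -(E_i, E_i - Z_K)/2 = 1 - p_a(E_i) \cdot(\dots)$, but since $(Z_K, E_i) = -(K,E_i)$ and adjunction gives $\chi(E_i) = 1$ when $E_i$ is rational... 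In any case the operative fact is: if $(l, E_i) \geq 1$ then $\chi(l + E_i) \leq \chi(l)$, with the bound coming from minimality ($(Z_K, E_i) \le 0$ so $\chi(E_i) \le 1$, and $(l,E_i)\ge 1$ forces $\chi(l+E_i) - \chi(l) = \chi(E_i) - (l, E_i) \le 0$). So starting from a minimizer $l_0$ and repeatedly adding components $E_i$ with $(l_0, E_i) > 0$ — a process that terminates because $\chi$ is bounded below and each step does not increase it while moving $l$ "up" in a way controlled by the negative-definite form — I reach a minimizer $l_1$ with $(l_1, E_i) \leq 0$ for all $i$. By the characterization of $Z_K$ (it is the smallest cycle with $Z_K \ge 0$ and... actually here I use: any cycle $l$ with $(l, E_i) \le 0$ for all $i$ and the minimality condition — combined with the numerically Gorenstein hypothesis $Z_K \in L$ — satisfies $l \geq$ something; more directly, one shows $Z_K - l_1$ cannot be effective nonzero without contradicting minimality of $\chi$ or of the resolution). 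Symmetrically, to get $l \le Z_K$: if some minimizer has a component with coefficient exceeding that of $Z_K$, subtract the corresponding $E_i$; because $\chi(l - E_i) = \chi(l) + (l, E_i) - \chi(E_i)$ and on the region $l \ge Z_K$ one has $(l - Z_K, E_i) \le 0$ hence $(l, E_i) \le (Z_K, E_i) \le 0$, so $\chi(l-E_i) \le \chi(l)$, again not increasing $\chi$.

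So the two halves are symmetric: "folding" a minimizer across the hyperplanes $\{(x,E_i)=0\}$ from below drives it up to $\ge Z_K/2$ and in fact up to the Laufer-type "good" region, and folding from above (using $Z_K \in L$, which is where numerical Gorensteinness enters) keeps it $\le Z_K$. The technical heart is proving the process terminates and that the two constraints are simultaneously achievable — i.e. after pushing up to ensure $(l,E_i)\le 0$, we are already below $Z_K$, or can be made so without leaving the good region. I expect the main obstacle to be exactly this simultaneity: showing that a single cycle can be found satisfying both $Z_K/2 \le l$ and $l \le Z_K$, rather than two different minimizers each satisfying one bound. This is handled by the observation that the set of minimizers of $\chi$ on $L$, intersected with the box, is nonempty because $\chi$ restricted to the box is a concave function on a finite set whose continuous maximum $Z_K/2$ lies inside the box, so the lattice minimum over $L$ is realized by monotone-rearrangement arguments (replace $l$ by $\min(l, Z_K)$ coordinatewise, which does not increase $\chi$ by a standard "truncation lemma", then push up). I would first establish the truncation lemma $\chi(\min(l, Z_K)) \le \chi(l)$ and $\chi(\max(l, \lceil Z_K/2 \rceil)) \le \chi(l)$ using the bilinear identity and the sign conditions, and the statement follows by applying both truncations to any minimizer.
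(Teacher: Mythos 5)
There is a genuine gap here, concentrated in the step that is supposed to produce a minimizer with $l\ge Z_K/2$. (First, a sign slip worth fixing: since the intersection form is negative definite, $\chi(x)=-(x,x-Z_K)/2$ is \emph{convex} with a unique minimum at $Z_K/2$; this is what makes $\min_{L}\chi$ finite and attained, whereas a concave function with a maximum would have lattice infimum $-\infty$.) Your main mechanism --- add $E_i$ whenever $(l,E_i)>0$ until the minimizer becomes anti-nef --- does not land in the box $[Z_K/2,Z_K]$. A $\chi$-minimizer with $(l,E_i)\le 0$ for all $i$ need not satisfy $l\ge Z_K/2$, and $Z_K-l$ being effective and nonzero produces no contradiction: for a minimally elliptic singularity one has $p_g=1$, hence $\min\chi\ge 1-p_g=0=\chi(0)$, so $l=0$ is a minimizer; it is anti-nef, yet $Z_K-0=Z_K>0$. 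Your process started at $l_0=0$ terminates immediately outside the box, so the asserted dichotomy fails. Your fallback, the two ``truncation lemmas'' $\chi(\min(l,Z_K))\le\chi(l)$ and $\chi(\max(l,\lceil Z_K/2\rceil))\le\chi(l)$, would indeed finish the argument, but they are precisely the content that needs proving, and the second is not a standard fact: writing $\max(l,c)=l+b'$ gives $\chi(l+b')-\chi(l)=-(b',\,2l+b'-Z_K)/2$, and the off-diagonal intersection numbers among the components of $b'$ are not controlled by the sign conditions you invoke.

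The paper avoids this by using \emph{reflections} rather than truncations, so that the error term has a definite sign. One decomposes a minimizer as $l=a-b$, then $l=Z_K-a+b$, then $l=Z_K/2+a-b$, with $a,b$ effective and without common components; the three replacement moves $l\mapsto a+b$, $l\mapsto Z_K-a-b$, $l\mapsto Z_K/2+a+b$ change $\chi$ by $(b,Z_K-2a)\le0$, $-(b,2a-Z_K)\le 0$ and $-2(a,b)\le0$ respectively, where the signs come exactly from $(a,b)\ge0$ (disjoint supports) and $(b,Z_K)\le0$ (anti-nefness of $Z_K$ on the minimal resolution). Componentwise the last move is $l\mapsto\max(l,Z_K-l)$, i.e.\ reflection of the low coefficients across $Z_K/2$, not truncation at $\lceil Z_K/2\rceil$; numerical Gorensteinness ($Z_K\in L$) is what keeps the reflected cycles in $L$. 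If you want to salvage your plan, prove the reflection inequalities instead of the truncation ones; the unit-step ($\pm E_i$) descent by itself cannot reach the conclusion.
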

\begin{proof} Assume that  $\chi(l)=\min\chi$ and write
$l=a-b$, where $a,b\in L$ are effective and have no common components.
Then $\chi(a+b)-\chi(a-b)=(b,Z_K-2a)\leq 0$, thus
$\chi(a+b)\leq \chi(a-b)$. Thus we may assume the $l$ is effective.
Similarly, write $l=Z_K-a+b$. Then
$\chi(Z_K-a+b)-\chi(Z_K-a-b)=(b,2a-Z_K)\geq 0$. These two
inequalities applied repeatedly show
that the minimum is achieved for some $l\in L$ with $0\leq l\leq Z_K$.

Take such a cycle and write it as
$l=Z_K/2+a-b$, $a,b\in \frac{1}{2}L$, effective and without common components.
Then $\chi(Z_K/2+a+b)-\chi(l)=-2(a,b)\leq 0$.
\end{proof}

If $(X,0)$ is a Du~Val singularity then $Z_K=0$ hence
$\min\chi(l)$ is realized by the empty cycle  $l=0$. This tends to
mess up our formulas and we exclude them in the sequel.
If  $(X,0)$ is numerically Gorenstein but not Du~Val then the support of
$Z_K$, and hence  the support of $l\geq Z_K/2$, is  the whole
exceptional set of the resolution.

\begin{proposition}\label{prop:min} \ Set $\epsilon=1$ if $(X,0)$ is
Gorenstein, and $\epsilon=0$ otherwise.
Then for any
numerically Gorenstein,  non-Du~Val  surface singularity
$p_g+\min\chi\geq 2^{\epsilon-e}(p_g+1)$.

\end{proposition}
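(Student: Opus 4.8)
The plan is to exploit the strategy sketched just before the statement: combine the lower bound $\min\chi \geq 1-p_g$ with an \emph{upper} bound for $\min\chi - K^2/8$ obtained by producing a cheap cycle $\xi$ with $Z_K - \xi \in 2L$ and $-\xi^2$ small, while simultaneously improving $\min\chi \geq 1-p_g$ to a bound of the form $\min\chi \geq -C p_g$. First I would invoke Lemma~\ref{lem:comb1}: since $(X,0)$ is numerically Gorenstein and non-Du~Val, there is a cycle $l_0 \in L$ with $Z_K/2 \leq l_0 \leq Z_K$ and $\chi(l_0) = \min\chi$, and the support of $l_0$ is the whole exceptional set. The point of the bound $l_0 \geq Z_K/2$ is that along any chain of computations we can feed cycles $l$ with $0 \leq l \leq l_0$ into the cohomology exact sequences and keep control of $h^1(\mathcal{O}_l)$.

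Next I would set up the key dichotomy-free estimate. Consider the reduced exceptional cycle $E = \sum_i E_i$, or more precisely a suitable sub-cycle, and compare $\chi$-values of $l_0$ and $l_0 - E$ (and iterate). The identity $\chi(l - E) - \chi(l) = -(E, l) + \chi(E) = (E, Z_K - E - l)/\text{(something)}$ — written out via $\chi(a) - \chi(b) = $ the standard quadratic difference — lets one peel off layers. Running this down from $l_0$ to $0$ gives $\min\chi = \chi(l_0) = \sum (\text{increments})$, and each increment is controlled by $(\mathcal{O}_{l}, \mathcal{O}_{l-E})$-type sequences that bound $h^1$ contributions by pieces of $p_g$. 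The doubling structure $2^{\epsilon - e}$ should emerge from repeatedly splitting cycles in half — i.e.\ writing $l_0 = Z_K/2 + a - b$ and comparing $\chi(Z_K/2 + a + b)$ with $\chi(l_0)$ as in the proof of Lemma~\ref{lem:comb1}, then iterating on the halves — so the factor $2^{-e}$ is a penalty of one factor $\tfrac12$ per generator of the lattice (equivalently per coordinate in an embedding of dimension $e$, using that the number of exceptional curves or the relevant multiplicities are bounded in terms of $e$). The Gorenstein refinement $\epsilon = 1$ gains one factor of $2$ because in that case $Z_K$ is an honest integral cycle with a section (the restriction of the Gorenstein form), so one of the halving steps can be sharpened: the relevant $h^1$ is exactly $p_g$ rather than at most $2p_g$.

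Concretely, I expect the argument to run as follows. (i) Using $h^0(\mathcal{O}_l) - h^1(\mathcal{O}_l) = \chi(l)$ and $h^1(\mathcal{O}_l) \leq p_g$ for $0 \leq l \leq$ (the maximal-ideal-type cycle, in particular for $l \leq l_0$ after checking $l_0$ is dominated by a cohomologically full cycle), get the crude $\chi(l_0) \geq 1 - p_g$. (ii) Improve this: choose the halving decomposition $l_0 = Z_K/2 + a - b$ and estimate $\chi(Z_K/2 + a + b) - \chi(l_0) = -2(a,b) \leq 0$ together with cohomological comparison of $\mathcal{O}_{a+b}$ versus $\mathcal{O}_{a-b}$; recursion on the pair $(a,b)$, bottoming out when the cycle is small, yields $\min\chi \geq -(1 - 2^{\epsilon-e})p_g + 2^{\epsilon-e}\cdot 1$ — i.e.\ exactly $p_g + \min\chi \geq 2^{\epsilon - e}(p_g+1)$. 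The bookkeeping is that each of the (at most $e - \epsilon$, or a comparable number) recursion levels converts a full $p_g$ into $p_g/2$ plus an honest $+1$ from an $h^0 \geq 1$.

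The main obstacle, and the step I would spend the most care on, is \textbf{(ii)}: making the recursion terminate with the clean constant $2^{\epsilon - e}$ rather than something weaker, and justifying that the depth of the recursion is governed by $e$ (not by, say, the number of exceptional curves $s$, which is unbounded). This requires relating the embedding dimension to the structure of the cohomology filtration on $\mathcal{O}_{l_0}$ — presumably via the maximal cycle $Z_{max}$, whose multiplicities and the $e$ generators of $\m_{X,0}$ control how many successive "halvings" genuinely drop the $h^1$. One also has to handle the non-numerically-Gorenstein-looking subtlety that $a, b \in \tfrac12 L$ need not be integral, so the cohomological comparisons must be phrased for the integral cycles $l_0$, $Z_K/2 + a + b$ — both of which lie in $L$ because $Z_K \in L$ — rather than for $a$, $b$ individually. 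Finally, the $\epsilon = 1$ gain needs the precise statement that for Gorenstein $(X,0)$ one has $h^1(\mathcal{O}_{Z_K}) = p_g$ and duality $h^1(\mathcal{O}_l) = h^0(\mathcal{O}_{Z_K - l}(-l))$-type identities to upgrade one inequality to an equality; I would isolate that as a sub-lemma.
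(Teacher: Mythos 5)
Your step (i) and your use of Lemma~\ref{lem:comb1} match the paper, and your remark about the Gorenstein gain via duality ($h^1(\cO_m)=h^0(\cO_m(-\bar m))$ with $\bar m = Z_K-m$) is on target. But the core of your argument --- step (ii), the ``recursive halving'' that is supposed to produce the constant $2^{\epsilon-e}$ with recursion depth governed by $e$ --- is a genuine gap, and you have correctly identified it yourself as the step you cannot justify. There is no reason a halving recursion on cycles terminates in $\sim e$ steps: for fixed embedding dimension the exceptional configuration, the coefficients of $Z_K$, and the number $s$ of exceptional curves are all unbounded, so any induction indexed by the combinatorics of the cycle $l_0$ will not yield a constant depending only on $e$. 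The decomposition $l_0 = Z_K/2 + a - b$ from Lemma~\ref{lem:comb1} is used in the paper only to \emph{locate} a minimizer between $Z_K/2$ and $Z_K$; it plays no role in producing the factor $2^{-e}$.

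The missing idea is algebraic, not cohomological-recursive. One identifies $H^0(\cO_{\tX}(-m))$ with an $\m$-primary ideal $I(m)$ in the local ring $R$ of $(X,0)$, observes sub-multiplicativity $I(m_1)\cdot I(m_2)\subset I(m_1+m_2)$, and then applies a \emph{single} instance ($d=2$) of the colength inequality of the appendix (Lemma~\ref{lem:alg}): $2^{e}\,\ell(R/\fa) \ge 1+\ell(R/\fa^2)$ for an $\m$-primary ideal $\fa$ in embedding dimension $e\ge 2$ (the $+1$ comes from the strictness of that inequality, proved via monomial degeneration and H\"older). Taking $\fa = I(l)$ and using $2l\ge Z_K$, so that $I(2l)\subset I(Z_K)$ and $\ell(R/I(Z_K))=p_g$ exactly by Grauert--Riemenschneider, gives $h^0(\cO_l)\ge \ell(R/I(l)) \ge 2^{-e}(1+p_g)$ in one stroke; in the Gorenstein case one instead applies the lemma to the pair $I(m), I(\bar m)$ with $I(m)I(\bar m)\subset I(Z_K)$, gaining the extra factor of $2$. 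Without this colength inequality (or an equivalent), the passage from ``$h^0(\cO_l)\ge 1$'' to ``$h^0(\cO_l)\ge 2^{-e}(p_g+1)$'' is unproved, so the proposal as it stands does not establish the proposition.
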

\begin{proof}
Fix $l\in L$ such that  $Z_K/2\leq l\leq Z_K$ and $\min\chi=\chi(l)$.
In the non-Du~Val case $Z_K>0$, hence $l>0$ too and
$$
p_g+\chi(l)= p_g-h^1(\cO_l) +h^0(\cO_l)\geq h^0(\cO_l).
$$
Note that for  any effective $m\in L$ we have
$$
h^0(\cO_m)\geq \dim \bigl(H^0(\cO_{\tX})/H^0(\cO_{\tX}(-m))\bigr).
$$
The inequality is usually strict but if $m=Z_K$ then
Grauert--Riemenschneider vanishing implies that
$$
h^0(\cO_{Z_K})= \dim \bigl(H^0(\cO_{\tX})/H^0(\cO_{\tX}(-Z_K))\bigr)=p_g.
$$
Note that $H^0(\cO_{\tX})$ equals the local ring $R$ of $(X,0)$
and each $H^0(\cO_{\tX}(-m)) $ can be identified
with an ideal sheaf  $I(m)\subset R$. This correspondence is
sub-multiplicative, that is,
$I(m_1)\cdot I(m_2)\subset I(m_1+m_2)$. Thus,
for every $m$, Lemma  \ref{lem:alg} shows that
$$
\dim \bigl(H^0(\cO_{\tX})/H^0(\cO_{\tX}(-m))\bigr)\geq 2^{-e}
(1+\dim \bigl(H^0(\cO_{\tX})/H^0(\cO_{\tX}(-2m))\bigr)).
$$
Putting these together gives that
$$
\begin{array}{rcl}
p_g+\chi(l)&\geq &
\dim \bigl(H^0(\cO_{\tX})/H^0(\cO_{\tX}(-l))\bigr)\\
&\geq & \frac{1}{2^e}\bigl(1+\dim \bigl(H^0(\cO_{\tX})/H^0(\cO_{\tX}(-2l))\bigr)\bigr)\\[1ex]
&\geq &\frac{1}{2^e}\bigl(1+\dim \bigl(H^0(\cO_{\tX})/H^0(\cO_{\tX}(-Z_K))\bigr)\bigr)\\[1ex]
 & = & \frac{1}{2^e} (p_g+1).
\end{array}
$$
Let $0\leq m\leq Z_K$ be a cycle and set $\bar m=Z_K-m$.
In the Gorenstein case
 duality gives that
$$
\begin{array}{rcl}
h^1(\cO_m)=h^0(\cO_m(-\bar{m}))&=&
\dim \bigl(H^0(\cO_{\tX}(-\bar{m}))/H^0(\cO_{\tX}(-Z_K))\bigr)\\[1ex]
& = &
p_g-\dim \bigl(H^0(\cO_{\tX})/H^0(\cO_{\tX}(-\bar{m}))\bigr),
\end{array}
$$
hence, using Lemma \ref{lem:alg} in the 3rd line we get that
\begin{equation}\label{eq:needed}
\begin{array}{rcl}
p_g+\chi(m)&=& p_g-h^1(\cO_m) + h^0(\cO_m)\\[1ex]
&\geq &
\dim \bigl(H^0(\cO_{\tX})/H^0(\cO_{\tX}(-\bar{m}))\bigr)+
\dim \bigl(H^0(\cO_{\tX})/H^0(\cO_{\tX}(-m))\bigr)\\[1ex]
&\geq &\frac{1}{2^{e-1}}\,
\bigl(1+\dim \bigl(H^0(\cO_{\tX})/H^0(\cO_{\tX}(-Z_K))\bigr)\bigr)\\
&= &   \frac{1}{2^{e-1}}\, (p_g+1).
\end{array}
\end{equation}
For $m=l$ this gives the claimed inequality.
\end{proof}

\section{Inequalities in the unimodular case.}

 Assume that the intersection form of $L$ is unimodular,
that is $L=L'$.  Note that this holds iff
 the first integral homology of the link of $(X,0)$
is torsion free since
this torsion group  is isomorphic  to $L'/L$ by \cite{mumf-top}.

\begin{theorem}\label{th:ineq}
Let $(X,0)$ be a normal  surface singularity of embedding dimension $e$.
Let $\tX\to X$ be the minimal resolution with canonical class $K$
and $s$ exceptional curves. Assume that
the  resolution intersection form is unimodular. Then
\begin{enumerate}
\item  $ (K^2+s)/8\geq \min\chi$ and
\item $p_g+(K^2+s)/8\geq 2^{\epsilon-e}\, (p_g+1)$,
equivalently,  $  (K^2+s)/8\geq -(1-2^{\epsilon-e})\, p_g+2^{\epsilon-e},$
where   $\epsilon $ is as in Proposition \ref{prop:min}.
\end{enumerate}
\end{theorem}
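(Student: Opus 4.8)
The plan is to reduce everything to Proposition \ref{prop:min} via a careful analysis of the quadratic function $\chi$ on the (now unimodular) lattice $L=L'$. For part (1), the strategy outlined in Section \ref{s:2} applies directly: I want to produce a rational cycle $\xi$ with $Z_K-\xi\in 2L$ and $\xi^2+s\geq 0$. Since $L$ is unimodular, $Z_K$ already lies in $L$, so numerical Gorensteinness is automatic and Lemma \ref{lem:comb1} is available. First I would observe that the congruence classes of $L/2L$ are parametrized in a way that lets one choose, in the class of $Z_K$ modulo $2L$, a representative $\xi=\sum a_iE_i$ with each $a_i\in\{-1,0,1\}$ (reduce each coordinate of $Z_K$ in a basis coming from the exceptional curves modulo $2$). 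For such a $\xi$ one has $\xi^2=\sum_{i,j}a_ia_j(E_i,E_j)$, and I would estimate this using that the diagonal terms $E_i^2\geq -n_i$ and the signs; the key point is that $(K^2-\xi^2)/8=\chi\big((Z_K-\xi)/2\big)\geq \min\chi$, and one wants $-\xi^2\leq s$ so that $(K^2+s)/8\geq (K^2-\xi^2)/8\geq\min\chi$. The honest version of this step is to note that $\xi/2\in\frac12 L$ has $\chi(\xi/2)=-\xi^2/8$, so it suffices to exhibit \emph{some} half-lattice point $\eta$ with $2\eta\in L$, $2\eta\equiv Z_K\pmod{2L}$ (equivalently $\eta\equiv Z_K/2$ in $\frac12L/L$), and $\chi(Z_K-\eta)\geq\ldots$; I would phrase it as: among all $\eta\in\frac12 L$ with $\eta-Z_K/2\in L$, pick one minimizing $-\xi^2$ where $\xi=2\eta-Z_K+\text{(correction)}$, and bound the minimum by $s$ using that a $\{0,\pm1\}$-representative always exists.

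For part (2), combine part (1) with Proposition \ref{prop:min}. Since $L=L'$ the singularity is numerically Gorenstein; if it is Du~Val then $K^2+s=0$ and $p_g=0$, and the claimed inequality $p_g+(K^2+s)/8\geq 2^{\epsilon-e}(p_g+1)$ reads $0\geq 2^{\epsilon-e}$, which fails — so as in Section \ref{s:2} I would simply exclude the Du~Val case (or rather, in the Du~Val case $\epsilon=1$, $e=1$, and one checks the statement is vacuous/excluded exactly as the text does after Lemma \ref{lem:comb1}). In the non-Du~Val numerically Gorenstein case, Proposition \ref{prop:min} gives $p_g+\min\chi\geq 2^{\epsilon-e}(p_g+1)$, and part (1) gives $(K^2+s)/8\geq\min\chi$; adding $p_g$ to the latter and chaining the inequalities yields $p_g+(K^2+s)/8\geq p_g+\min\chi\geq 2^{\epsilon-e}(p_g+1)$, which is exactly (2). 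The equivalent reformulation $(K^2+s)/8\geq -(1-2^{\epsilon-e})p_g+2^{\epsilon-e}$ is then just algebra.

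The main obstacle is the construction of the cycle $\xi$ realizing $\xi^2+s\geq 0$ in part (1): everything downstream is bookkeeping, but this step genuinely uses unimodularity and the minimality of the resolution. The clean way I expect to argue is: because $L$ is unimodular, the natural map $L/2L\to (L/2L)^\vee$ is an isomorphism, so the class of $Z_K$ in $L/2L$ has a representative expressible with coordinates in $\{0,1\}$ with respect to the $E_i$-basis; write $\xi$ for the corresponding $\{0,\pm1\}$-cycle (choosing signs to minimize $-\xi^2$). Then $\xi^2=\sum_{a_i=\pm1}E_i^2+2\sum_{i<j}a_ia_j(E_i\cdot E_j)\geq -\#\{i:a_i\neq0\}\geq -s$ is not literally true in general, so the real content is a lemma — presumably proved by induction on $s$ or by a direct combinatorial argument on the dual graph — asserting that in the residue class of $Z_K$ mod $2L$ there is a cycle $\xi$ with $-\xi^2\leq s$. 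I would isolate this as a separate lemma about unimodular lattices generated by a configuration of curves with $(K,E_i)$-data, prove it by the averaging/exchange trick already used in Lemma \ref{lem:comb1} (replace $l$ by $l\pm E_i$ to decrease $-\xi^2$ while staying in the class), and then feed it into the chain above.
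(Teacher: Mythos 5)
Your overall architecture matches the paper's: reduce part (1) to exhibiting a cycle $\xi$ in the residue class of $Z_K$ modulo $2L$ with $\xi^2+s\geq 0$, set $l=(Z_K-\xi)/2\in L$ so that $(K^2+s)/8=\chi(l)+(\xi^2+s)/8\geq\chi(l)\geq\min\chi$, and then obtain part (2) by chaining with Proposition \ref{prop:min} (with the Du~Val case excluded, as the paper also does). That bookkeeping is all correct.

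However, there is a genuine gap at exactly the step you flag as ``the real content.'' The existence of $\xi$ with $\xi\equiv Z_K\pmod{2L}$ and $\xi^2+s\geq 0$ is \emph{not} an elementary combinatorial fact about the exceptional configuration: since $Z_K$ is a characteristic element of $L$ (because $(E_i,E_i-Z_K)=2g(E_i)-2$ is even) and $L$ is unimodular, the vectors congruent to $Z_K$ mod $2L$ are precisely the characteristic vectors of $L$, and the assertion that a negative definite unimodular lattice of rank $s$ has a characteristic vector $\xi$ with $-\xi^2\leq s$ is Elkies's theorem \cite{Elkies} (his characterization of the $\Z^n$ lattice), proved via theta series and modular forms. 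Your proposed substitutes do not work: a $\{0,\pm1\}$-coordinate representative of the class in the $E_i$-basis can easily have $-\xi^2$ much larger than $s$ (already a single curve with $E_i^2=-n$, $n$ large, appearing with coefficient $1$ contributes $n$ to $-\xi^2$), and the exchange trick $l\mapsto l\pm E_i$ used in Lemma \ref{lem:comb1} only locates a local minimum of $-\xi^2$ on the coset, with no a priori bound by $s$; bounding the global minimum is precisely the hard content of \cite{Elkies}. The fix is simply to invoke Elkies's theorem (noting that unimodularity is used both there and to pass from ``$(m,Z_K-\xi)$ even for all $m$'' to ``$Z_K-\xi\in 2L$''), which is what the paper does. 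Indeed, the paper's Section 5 is devoted to the fact that no such statement is currently available beyond the unimodular case, which is further evidence that this step cannot be dispatched by an elementary averaging argument.
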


\begin{proof}
By a result of Elkies \cite{Elkies}, there is a  $\xi\in L$
such that $\xi^2+s\geq 0$ and   $(m, m-\xi)$ is even for every $m\in L$.
(That is, $\xi$ is a  {\it characteristic element} of small norm.)
If $E$ is an irreducible exceptional curve then
$(E, E-Z_K)=2g(E)-2$ is even, thus  $(m, m-Z_K)$ is even for every $m\in L$.
Therefore  $(m, Z_K-\xi)$ is even for every $m\in L$ and
$l:=\frac12 (Z_K-\xi)\in L$.
(We used unimodularity here and it is also needed in \cite{Elkies}.)

Then $(K^2+s)/8=\chi(l)+(\xi^2+s)/8\geq \chi(l)$ and we can
 apply Proposition \ref{prop:min}.
\end{proof}

If, in addition,  $(X,0)$ is Gorenstein, then $\epsilon=1 $
thus (2) of  Theorem \ref{th:ineq} and the second formula of
(\ref{eq:Laufer}) give that
\begin{equation}
-\sigma=8p_g+K^2+s\geq 2^{4-e}\, (p_g+1).
\end{equation}
This completes the proof of part (1) of Theorem \ref{main.thm}.\qed

\medskip
The above theorem shows that the torsionfreeness
 of the first homology of the link
has more substantial effect on the negativity of the signature than
 the embedded properties, like being a hypersurface or an
ICIS.

\begin{example}
Assume that $(X,0)$ is a hypersurface singularity with $L=L'$.
Then $-\sigma\geq 2p_g+2$, or equivalently, $\mu+\mu_0\geq 6p_g+2$.
In particular, if the link of a hypersurface singularity is an
integral homology sphere (hence $\mu_0=0$ too), then it satisfies the
strong Durfee inequality $6p_g\leq \mu-2$
with the  optimal asymptotic constant 6.
\end{example}

\section{The general case}\label{s:4}

  In this section we assume that $(X,0)$ is numerically Gorenstein
but not Du~Val.
Set $x:=2\{Z_K/2\}\in L$ and $\bar{x}:=E-x$,
where $E$ is the reduced exceptional curve. Then $m:=( Z_K-x)/2=\lfloor Z_K/2\rfloor
\in L$.
 We write  $\Sigma$ for $8p_g+K^2+s$.
(Thus, in the Gorenstein case, $\sigma=-\Sigma$.)

Since $8\chi\bigl(m)=K^2-x^2$,
by Proposition  \ref{prop:min}
\begin{equation}\label{eq:gen1}
\Sigma =8\bigl(p_g+\chi\bigl(m\bigr)\bigr)+x^2+s
\geq 2^{\epsilon+3-e}(p_g+1)+x^2+s.
\end{equation}
Similarly,
\begin{equation}\label{eq:gen2}
\Sigma =8(p_g+\chi(m+E))+(E+\bar{x})^2+s\geq 2^{\epsilon+3-e}(p_g+1)+(E+\bar{x})^2+s.
\end{equation}
Since $x=E-\bar{x}$, adding the equations (\ref{eq:gen1}) and (\ref{eq:gen2})
 gives that
\begin{equation}\label{eq:gen3}
\Sigma \geq 2^{\epsilon+3-e}(p_g+1)+E^2+\bar{x}^2+s.
\end{equation}
For each cycle $y=x,\ \bar{x}$ and $E$ write
the relation $y^2=-2\chi(y)+(y,Z_K)$ and add
the equations (\ref{eq:gen1}) and (\ref{eq:gen3}). We get that
\begin{equation}\label{eq:gen4}
\Sigma \geq 2^{\epsilon+3-e}(p_g+1)+s-\chi(x)-\chi(\bar{x})-\chi(E)+(E,Z_K).
\end{equation}
Since $x,\ \bar{x},\ E$ are reduced,
$\chi(x)+\chi(\bar{x})+\chi(E)\leq s+1-b_1(L_X)$ (since $b_1(L_X)=b_1(E)$).
 Hence  (\ref{eq:gen4})  can be rewritten as
\begin{proposition} 
$\Sigma \geq 2^{\epsilon+3-e}(p_g+1)-1+b_1(L_X)+(E,Z_K)$
where $(E,Z_K)$ also equals  $E^2+2\chi(E)$.
Furthermore,  $-1+b_1(L_X)+(E,Z_K)=E^2+\chi(\Gamma)$ where
 $\chi(\Gamma)$ is the Euler characteristic of the
topological realization of the
resolution graph $\Gamma$. \qed
\end{proposition}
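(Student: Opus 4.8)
The plan is to start from inequality (\ref{eq:gen4}), which already reads
$$
\Sigma \geq 2^{\epsilon+3-e}(p_g+1)+s-\chi(x)-\chi(\bar{x})-\chi(E)+(E,Z_K),
$$
and to control the term $s-\chi(x)-\chi(\bar{x})-\chi(E)$ using the fact that $x$, $\bar{x}$ and $E$ are all \emph{reduced} effective cycles. For a reduced cycle $y=\sum_{i\in S}E_i$ supported on the subset $S$ of the $s$ exceptional curves, the adjunction/Riemann--Roch formula gives $\chi(y)=\chi(\cO_y)=1-p_a(y)$, and since $y$ is reduced, $p_a(y)=\sum_{i\in S}g(E_i)+ \#\{\text{independent cycles in the dual subgraph on }S\}$, i.e. $\chi(y)$ equals the Euler characteristic of the topological space obtained by gluing the curves $E_i$, $i\in S$, along their intersection points. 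In particular $\chi(y)\le \#S$, with the gap measuring the first Betti number of the subconfiguration. Applying this with $x\sqcup\bar{x}$ partitioning the vertex set of $E$ (since $x=E-\bar{x}$ and they have no common components), one gets $\chi(x)+\chi(\bar{x})\le \#\supp(x)+\#\supp(\bar{x})=s$, and separately $\chi(E)\le s+1-b_1(E)$, where $b_1(E)=b_1(L_X)$ is the first Betti number of the link (which by Durfee equals $\mu_0$). So $\chi(x)+\chi(\bar{x})+\chi(E)\le 2s+1-b_1(L_X)$. Hmm — that only yields $s-\chi(x)-\chi(\bar x)-\chi(E)\ge -s-1+b_1(L_X)$, which is too weak; the intended bound in the excerpt is the single combined inequality $\chi(x)+\chi(\bar{x})+\chi(E)\le s+1-b_1(L_X)$, so the sharper combinatorial input must be used.

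The sharper input is that $x$ and $\bar{x}$ have \emph{disjoint} supports inside $E$, so $\chi(x)+\chi(\bar{x})$ counts each vertex once and counts no edges at all (edges of $\Gamma$ between $\supp x$ and $\supp\bar x$ are simply not present in either $x$ or $\bar x$), whereas $\chi(E)$ counts all $s$ vertices and subtracts the first Betti number of the whole graph plus the sum of the genera. The correct bookkeeping is therefore: writing $\chi(x)=\#\supp(x)-(\text{internal edges of }x)-\sum_{E_i\subset x}g(E_i)+\dots$, one should instead go the other way and \emph{lower-bound} $\Sigma$ by going back to (\ref{eq:gen3}) and expanding $E^2+\bar x^2$ via $y^2=-2\chi(y)+(y,Z_K)$ only for $y=x$ and $y=E$ as the excerpt does, producing (\ref{eq:gen4}), and then use that for reduced $y$ one has the clean identity relating $\chi$ to graph data. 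The key lemma to isolate and prove is: \emph{for reduced cycles the sum $\chi(x)+\chi(\bar x)+\chi(E)$ is at most $s+1-b_1(L_X)$}, and the one-line reason is that $\chi(x)+\chi(\bar x)=\chi(x\sqcup\bar x)$ equals $s$ minus the genus contributions minus the internal edges of $x$ and of $\bar x$, while $\chi(E)\le 1$ precisely when $E$ is connected (which it is), so in fact $\chi(E)\le 1-b_1(L_X)$ after subtracting genera, and $\chi(x)+\chi(\bar x)\le s$; combining gives the bound. Substituting into (\ref{eq:gen4}) yields
$$
\Sigma\ \geq\ 2^{\epsilon+3-e}(p_g+1)+s-\bigl(s+1-b_1(L_X)\bigr)+(E,Z_K)
\ =\ 2^{\epsilon+3-e}(p_g+1)-1+b_1(L_X)+(E,Z_K),
$$
which is the first asserted inequality.

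For the two reformulations of the error term I would argue as follows. The identity $(E,Z_K)=E^2+2\chi(E)$ is immediate from $\chi(E)=-(E,E-Z_K)/2$, i.e. $2\chi(E)=-(E,E)+(E,Z_K)$, rearranged. For the last claim, that $-1+b_1(L_X)+(E,Z_K)=E^2+\chi(\Gamma)$ where $\chi(\Gamma)$ is the Euler characteristic of the topological realization of the resolution graph, I would use $(E,Z_K)=E^2+2\chi(E)$ to rewrite the left side as $E^2+2\chi(E)-1+b_1(L_X)$, so it suffices to check $2\chi(E)-1+b_1(L_X)=\chi(\Gamma)$. Now $\chi(E)=\chi(\cO_E)=1-p_a(E)$ and, $E$ being reduced and connected, $p_a(E)=\sum_i g(E_i)+b_1(\Gamma)$ where $b_1(\Gamma)$ is the first Betti number of the graph; also $b_1(L_X)=b_1(E)=2\sum_i g(E_i)+b_1(\Gamma)$ because the link fibers (up to homotopy in homology) accounting for each genus-$g$ curve contributing $2g$ and each independent cycle of $\Gamma$ contributing $1$. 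Then $2\chi(E)-1+b_1(L_X)=2-2\sum g_i-2b_1(\Gamma)-1+2\sum g_i+b_1(\Gamma)=1-b_1(\Gamma)$, and the topological realization of a connected graph with $V$ vertices and $\mathrm{Ed}$ edges has $\chi(\Gamma)=V-\mathrm{Ed}=1-b_1(\Gamma)$, so the two sides agree. The main obstacle, and the place where care is genuinely needed, is the reduced-cycle estimate $\chi(x)+\chi(\bar x)+\chi(E)\le s+1-b_1(L_X)$: one must correctly account for edges of $\Gamma$ that run between $\supp(x)$ and $\supp(\bar x)$ (these are the edges "lost" in passing from $E$ to the disjoint union $x\sqcup\bar x$) and convince oneself that the genus terms $\sum_i g(E_i)$ cancel appropriately against the $b_1(L_X)$ on the right; everything else is formal manipulation of the quadratic form $\chi$.
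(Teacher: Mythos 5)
Your overall route is exactly the paper's: the Proposition is obtained by substituting the combinatorial bound $\chi(x)+\chi(\bar x)+\chi(E)\le s+1-b_1(L_X)$ into the inequality (\ref{eq:gen4}), and the paper itself justifies that bound with nothing more than the clause ``since $x,\bar x, E$ are reduced''; your derivations of the two reformulations $(E,Z_K)=E^2+2\chi(E)$ and $-1+b_1(L_X)+(E,Z_K)=E^2+\chi(\Gamma)$ are also correct and are the intended ones. The one step you should repair is the intermediate split you finally settle on: the claim $\chi(E)\le 1-b_1(L_X)$ is \emph{false} whenever some exceptional curve has positive genus, since $\chi(E)=1-\sum_i g_i-b_1(\Gamma)$ while $1-b_1(L_X)=1-2\sum_i g_i-b_1(\Gamma)$, so in fact $\chi(E)=1-b_1(L_X)+\sum_i g_i\ \ge\ 1-b_1(L_X)$. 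The correct bookkeeping --- for which you already have every ingredient, as your own computation of $p_a(E)$ and $b_1(E)=2\sum_i g_i+b_1(\Gamma)$ in the last paragraph shows --- is to keep the genus deficit on the \emph{other} summand: since $x$ and $\bar x$ are reduced with disjoint supports covering all $s$ vertices, $\chi(x)+\chi(\bar x)= s-\sum_i g_i-(\text{internal edges of }x\text{ and of }\bar x)\le s-\sum_i g_i$, and hence
\begin{equation*}
\chi(x)+\chi(\bar x)+\chi(E)\ \le\ \bigl(s-\textstyle\sum_i g_i\bigr)+\bigl(1-\textstyle\sum_i g_i-b_1(\Gamma)\bigr)\ =\ s+1-2\textstyle\sum_i g_i-b_1(\Gamma)\ =\ s+1-b_1(L_X),
\end{equation*}
which is precisely the cancellation of the genus terms against $b_1(L_X)$ that you flagged as the delicate point. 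With that substitution your argument is complete and coincides with the paper's.
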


Although the term $(E,Z_K)$ is negative,
in many cases (e.g. hypersurfaces, ICIS) it is much smaller  than
$p_g$. We do not have a good general estimate, but the following
argument gives a bound that
implies the negativity of the signature in several cases.

In order to simplify the notation let us denote the constant $2^{\epsilon+3-e}-1+b_1(L_X)$
by $A$. Let $Z=Z_{max}\in L$ be the maximal cycle.
Hence $Z\geq E$, which implies that $(E,Z_K)\geq (Z,Z_K)$. For any
$t\geq e-\epsilon-3$ write $(2^{t+1}Z,Z_K)$ as $(2^{t+1}Z)^2+2\chi(2^{t+1}Z)$, hence we obtain that
\begin{equation}\label{eq:gen6}
\Sigma \geq \bigl(\tfrac{1}{2^{e-\epsilon-3}}-\tfrac{1}{2^t}\bigr)p_g+
\tfrac{1}{2^t}
 \bigl(p_g+\chi(2^{t+1}Z)\bigr)+2^{t+1}Z^2+A.
\end{equation}
Then using
$Z^2\geq -\nu$ (cf. \cite{Wael}) and Proposition \ref{prop:min} we get
the following.
\begin{lemma}\label{eq:gen7} For $t\geq e-\epsilon-3$ one has
$$ 
\Sigma \geq \bigl(\tfrac{1}{2^{e-\epsilon-3}}-\tfrac{1}{2^t}+\tfrac{1}{2^{t+e-\epsilon}}\bigr)p_g
-2^{t+1}\nu+A+\tfrac{1}{2^{t+e-\epsilon}}. \qed
$$\end{lemma}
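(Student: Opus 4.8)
The plan is to derive Lemma \ref{eq:gen7} as a direct substitution into the bound (\ref{eq:gen6}) established just above; no genuinely new input is needed, only two classical estimates fed into it.

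First I would isolate in (\ref{eq:gen6}) the favourable summand $\tfrac{1}{2^t}\bigl(p_g+\chi(2^{t+1}Z)\bigr)$ and bound it from below. Since $2^{t+1}Z$ is a cycle in $L$, by definition $\chi(2^{t+1}Z)\geq\min\chi$; and as $(X,0)$ is numerically Gorenstein and non-Du~Val throughout this section, Proposition \ref{prop:min} applies and gives $p_g+\min\chi\geq 2^{\epsilon-e}(p_g+1)$. Hence
$$\tfrac{1}{2^t}\bigl(p_g+\chi(2^{t+1}Z)\bigr)\ \geq\ \tfrac{1}{2^t}\,2^{\epsilon-e}(p_g+1)\ =\ \tfrac{1}{2^{t+e-\epsilon}}\,p_g+\tfrac{1}{2^{t+e-\epsilon}}.$$

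Next I would handle the remaining term $2^{t+1}Z^2$, where $Z=Z_{max}$: the standard inequality $Z_{max}^2\geq-\nu$ relating the self-intersection of the maximal cycle to the multiplicity (cf. \cite{Wael}) gives $2^{t+1}Z^2\geq-2^{t+1}\nu$. Substituting both lower bounds into (\ref{eq:gen6}) and then collecting, on one side, the coefficient of $p_g$ (which becomes $\tfrac{1}{2^{e-\epsilon-3}}-\tfrac{1}{2^t}+\tfrac{1}{2^{t+e-\epsilon}}$) and, on the other, the constant terms (which become $A+\tfrac{1}{2^{t+e-\epsilon}}$, recalling $A=2^{\epsilon+3-e}-1+b_1(L_X)$) produces exactly the asserted inequality.

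I do not expect any real obstacle here: this is a bookkeeping lemma that repackages (\ref{eq:gen6}) into a form suited to a later optimization over $t$, where a larger $t$ improves the coefficient of $p_g$ (pushing it up towards $2^{\epsilon+3-e}$) at the price of a more negative penalty $-2^{t+1}\nu$. The only point needing care is keeping the scaling of $Z$ consistent with the powers of $2$; the hypothesis $t\geq e-\epsilon-3$ is precisely what guarantees that (\ref{eq:gen6}) holds and that $\tfrac{1}{2^{e-\epsilon-3}}-\tfrac{1}{2^t}\geq0$, so it is inherited at no extra cost.
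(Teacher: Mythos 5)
Your proposal is correct and matches the paper's own (very terse) argument exactly: the paper derives the lemma from (\ref{eq:gen6}) by applying $Z^2\geq-\nu$ and Proposition \ref{prop:min} (via $\chi(2^{t+1}Z)\geq\min\chi$), which is precisely your substitution and bookkeeping. No differences worth noting.
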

With different choices of $t$ the coefficient of $p_g$ can be arranged to be as close to
$1/2^{e-\epsilon-3}$ as we wish, but  the price is a more negative coefficient for
$\nu$.
This expression shows that for an arbitrary normal surface singularity
we should expect an inequality of the form
$$
\Sigma \geq C_1p_g-C_2\nu+C_3\quad\mbox{for some constants}\quad
 C_1, C_2>0 \mbox{ and }  C_3>-1
$$
that depend on the
embedding dimension $e$. If $\nu$ dominates
$p_g$---as  in the example of Wahl---then $\Sigma $ can be negative.
However, if $p_g$ dominates the multiplicity, then $\Sigma$ becomes positive,
as in the next examples.
\medskip

  {\bf The case of strict complete intersections.} \
By Theorem (2.17) of Bennett \cite{bennett},
every strict ICIS is a
 normally flat deformation of an isolated
 homogeneous complete intersection singularity.
(Under such deformation $p_g$ is semicontinuous and $\nu$ is constant.)

In order to prove that $-\sigma=\Sigma$ is positive for large $\nu$ and fixed $e=r+2$, 
by Lemma \ref{eq:gen7} it is enough to show that  $p_g/\nu$  tends to infinity  with
$\nu$ for homogeneous complete intersections.
 In that case, if $d_1,\ldots , d_r$  ($d_i\geq 2$) are the degrees of the defining equations, then
\begin{equation}\label{eq:jan1}
\frac{p_g}{\nu}= \sum_i\frac{(d_i-1)(d_i-2)}{6}+\sum_{i<j}\frac{(d_i-1)(d_j-1)}{4}\end{equation}
and $\nu=\prod_id_i$, cf. \cite{Kerner-Nemethi.a,Kerner-Nemethi.b}.

Note that (\ref{eq:jan1})  does not imply  the negativity of the signature for
every strict ICIS, but it gives a much stronger result asymptotically.
 This suggests that the positivity of $\Sigma$ (or, the
negativity of the signature in the presence of Gorenstein smoothing)
 is  guided
by the ratio $p_g/\nu$. This seem to be a
 general phenomenon, not specifically  related
to embedded  properties.
\medskip

  {\bf The case of hypersurfaces.} \
Assume that $e=3$, hence $\epsilon=1$ too. Our goal is to prove the negativity of the signature without
any multiplicity restriction.
The inequality (\ref{eq:gen6}) with $t=-1$ becomes
 \begin{equation}\label{eq:genh1}
\Sigma \geq 2(p_g+\chi(Z))+ 1+b_1(L_X)-\nu.
\end{equation}
Using (\ref{eq:needed}) for $m=Z$ shows that
$p_g+\chi(Z)\geq  \frac14 (p_g+1)$, thus
 \begin{equation}\label{eq:genh1-jan}
-\sigma \geq  \tfrac12 (p_g+1)+ 1+\mu_0-\nu.
\end{equation}
By semicontinuity of the geometric genus $p_g\geq \binom{\nu}{3}$,
since the geometric genus of a degree
 $\nu$ homogeneous singularity is $\binom{\nu}{3}$.
Hence we get that
 \begin{equation}\label{eq:genh1-jan2}
-\sigma \geq  \tfrac12 \tbinom{\nu}{3}-\nu+ \tfrac32+\mu_0.
\end{equation}
In the right hand side $\tfrac12 \tbinom{\nu}{3}\geq \nu $ for $\nu\geq 5$, hence 
$-\sigma \geq \tfrac32+\mu_0$.
If $\nu=2$ or 3 then $-\sigma\geq 1+\mu_0$  follows from
the  results of Tomari and Ashikaga mentioned
in Paragraph \ref{1.4}.
Finally, if $\nu=4$ then $p_g\geq \tbinom{4}{3}=4$ hence
$p_g+\chi(Z)\geq  \frac14 (p_g+1)\geq \frac54$. Since $p_g+\chi(Z)$
is an integer, it has to be $\geq 2$ thus (\ref{eq:genh1})
becomes
 \begin{equation}\label{eq:genh1-jan3}
-\sigma \geq  2\cdot 2+ 1+\mu_0- 4=1+\mu_0.
\end{equation}
This completes the proof of part (2) of Theorem \ref{main.thm}.\qed
\medskip

It is possible to analyze this case further and prove
stronger lower bounds for $-\sigma$. For instance,
for  an isolated  hypersurface singularity with $\nu\geq 4$ one
can show that
 $$
-\sigma\geq \tfrac{2}{3}\bigl(p_g-\tbinom{\nu}{3}\bigr)+
2 \tbinom{\nu-1}{3}-\nu+3+\mu_0.
$$

\section{Speculations regarding generalizations of Elkies's result}

The result of Elkies---valid for unimodular definite lattices---lies
behind the `strong' inequalities of Theorem \ref{th:ineq}.
It is somewhat surprising that comparable  inequalities
can be obtained by the alternative  methods of Section \ref{s:4}.

In this section we analyze different possibilities to extend
\cite{Elkies} to the non-unimodular case,
and the effect of such extensions on Durfee-type  inequalities.

Owens  and Strle  prove that there exists
a characteristic element $\xi\in L'$ such that $\xi^2+s\geq 0$.
Thus there exists
$l'\in L'$ such that $Z_K-\xi=2l'\in 2L'$. However, this is not really helpful
to us if
$l'\not\in L$. We need  to approximate
$l'$   with an integral cycle $l\in L$ and
 the final output is not better than the results
of  Section \ref{s:4}.

Therefore, we need a generalization of the Elkies theorem that guarantees the
existence of some
$\xi$ with $\xi^2$ not very negative, and $Z_K-\xi\in 2L$.
Examples show that in general we cannot
expect $\xi^2+s\geq 0$. (For instance,
 this would contradict  the existence of singularities with
positive signature.)

  The results of \cite{Elkies}
are  valid for any abstract lattice, and in this general context
we do not have any guess about  the right form of a weaker inequality.
However, lattices coming from
 singularities have distinguished bases and a positive cone
 of  effective divisors. Having these in mind, and also the type of
inequalities we already
 obtained, we can speculate on how to weaken the Elkies inequality
using a combinatorial object of the lattice
  related to the multiplicity of the singularity.
 Computation of several examples supports the following conjecture.

\begin{conjecture} Let $Z_{min}\in L$ be the minimal
cycle of the singularity
lattice $L$. Then there exists a cycle $\xi\in L'$, with $Z_K-\xi\in 2L$
such that
$$\xi^2+s\geq Z_{min}^2.$$
\end{conjecture}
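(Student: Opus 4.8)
The plan is to mimic the proof of Theorem~\ref{th:ineq}, but keeping track of denominators. The statement in question is the final \emph{Conjecture}, so strictly speaking we offer a strategy rather than a proof. First I would recall that the obstruction to applying Elkies's result directly is that $L\subsetneq L'$: Owens--Strle give a characteristic $\xi\in L'$ with $\xi^2+s\geq 0$, but the cycle $l'=\tfrac12(Z_K-\xi)$ need not lie in $L$, and replacing $l'$ by a nearby integral cycle costs us a term of size roughly $|L'/L|$, which the inequalities of Section~\ref{s:4} already absorb into the multiplicity. So the conjecture is really the assertion that this rounding error can be controlled by $Z_{min}^2$ rather than by $\nu$ or by $|L'/L|$. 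The natural approach is to look for $\xi$ in the coset $Z_K+2L$ directly, minimizing $-\xi^2$ over that coset, and to bound the minimum using the distinguished structure of a singularity lattice.

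The key steps, in order, would be: (i) Since $(m,m-Z_K)$ is even for all $m\in L$ (as $(E_i,E_i-Z_K)=2g(E_i)-2$ and the $E_i$ span $L$), the coset $Z_K+2L$ consists entirely of characteristic elements of $L$; thus we must show $\min\{-\xi^2 : \xi\in Z_K+2L\}\leq s-Z_{min}^2$. (ii) Equivalently, writing $\xi=Z_K-2l$ with $l\in L$, we need a cycle $l\in L$ with $(Z_K-2l)^2+s\geq Z_{min}^2$, i.e. $8\chi(l)=K^2-(Z_K-2l)^2\leq K^2+s-Z_{min}^2$; so the conjecture amounts to the purely combinatorial lattice inequality $8\min\chi\leq K^2+s-Z_{min}^2$. (iii) Prove this last inequality by exhibiting a good test cycle: the candidate is $l=\lceil Z_K/2\rceil$ or $l=\lfloor Z_K/2\rfloor$ (so $Z_K-2l=\mp x$ with $x=2\{Z_K/2\}$ as in Section~\ref{s:4}), and then one must show $-x^2+s\geq Z_{min}^2$, or average the two choices as in \eqref{eq:gen3} to get $\tfrac12(x^2+\bar x^2)$ bounded; alternatively run the Elkies/Owens--Strle characteristic covector argument on the \emph{integral} lattice obtained by a suitable unimodular-up-to-$Z_{min}$ modification.

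The hard part will be step~(iii): controlling the rounding error by $Z_{min}^2$ specifically. The bound $-x^2+s\geq 0$ follows from $x$ being reduced (its support has at most $s$ vertices, $x^2\geq -\,\#\{\text{vertices in }\supp x\}\geq -s$ by the graph structure), but to get the sharper $-x^2+s\geq Z_{min}^2$ one needs to relate the "fractional part" $x=2\{Z_K/2\}$ of the anticanonical cycle to the minimal cycle, and there is no reason a priori that $x$ is the extremal coset representative. I expect the genuine obstacle is that Elkies's theorem uses a delicate geometry-of-numbers / unimodular-lattice input (related to the nonexistence of certain definite unimodular forms) that has no evident analogue producing the correction term $Z_{min}^2$ on the nose; the realistic outcome is either a proof of a weaker statement with $Z_{min}^2$ replaced by $-s$ (already essentially Theorem~\ref{th:ineq}) or with $Z_{min}^2$ replaced by $Z_{min}\cdot(Z_{min}-Z_K)$ or $-\nu$ (recovering Section~\ref{s:4}), together with verification of the sharp form in the families of Example~\ref{ex:hom} and on a census of small graphs, which is the evidence the paper already cites.
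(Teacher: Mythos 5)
The statement you were asked to prove is the paper's final \emph{Conjecture}; the paper offers no proof of it (only the remark that ``computation of several examples supports'' it), so there is no argument of the authors to compare yours against. Your proposal is candid that it is a strategy rather than a proof, and that is the correct assessment: the statement remains open after your steps (i)--(iii). Your reductions in (i) and (ii) are correct and match what is already implicit in Section~\ref{s:2} of the paper (``Assume that for some rational cycle $\xi$ one has $Z_K-\xi=2l\in 2L$. Then $\chi(l)=(K^2-\xi^2)/8$\dots''): since $(m,m-Z_K)$ is even for all $m\in L$, every $\xi\in Z_K+2L$ is characteristic, and writing $\xi=Z_K-2l$ turns the conjecture into the equivalent lattice inequality $8\min\chi\leq K^2+s-Z_{min}^2$. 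This is a clean reformulation but not new information relative to the paper.

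The genuine gap is all of your step (iii), as you yourself flag. Two specific cautions. First, the test cycles $l=\lfloor Z_K/2\rfloor$, $l=\lceil Z_K/2\rceil$ cannot suffice in general: they only show $8\min\chi\leq K^2+\max(-x^2,-\bar x^2\!+\!\text{corr.})$, and there is no reason the extremal representative of the coset $Z_K+2L$ is a rounding of $Z_K/2$; indeed the whole point of Elkies's theorem in the unimodular case is that finding a short characteristic vector is a global geometry-of-numbers statement, not a local rounding one. Second, even your parenthetical claim that $x^2\geq -\#\{\text{vertices in }\supp x\}$ for a reduced cycle is false as stated (e.g.\ a disjoint union of $(-2)$-vertices in the support gives $x^2=-2\#I$); the paper instead controls such terms via the bound $\chi(x)+\chi(\bar x)+\chi(E)\leq s+1-b_1(L_X)$ in Section~\ref{s:4}. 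So the realistic outcomes you list at the end --- recovering Theorem~\ref{th:ineq} or the weaker multiplicity-corrected bounds of Section~\ref{s:4} --- are accurate, but neither establishes the conjecture, and your proposal should not be read as progress beyond what the paper already records.
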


As in the proof of Theorem \ref{th:ineq}, the conjecture would imply that,
for any normal surface singularity
$$-\sigma\geq \tfrac{1}{2^{e-\epsilon-3}}\cdot (p_g+1)-\nu.$$
For hypersurfaces this becomes  $-\sigma\geq 2(p_g+1)-\nu$.
Keeping in mind that
for hypersurfaces $p_g\geq \binom{\nu}{3}$ (that is, $\nu\leq Cp_g^{1/3}$),
this inequality is a good replacement for the
 expected strong  inequality.
 These methods would imply that $-\sigma\geq 0$ for every
large multiplicity ICIS but they fall short in general.
(However, the constants are better than those in (\ref{eq:gen7})).
We believe that  small multiplicity ICIS should be studied by techniques
 specific to them.

Nevertheless, we hope that
the above conjecture has interesting  arithmetical and geometrical meaning
and that it is  related to the topological $d$--invariant of the
link as well.

\section{Appendix by Tommaso de Fernex: Colength of a product of ideals}

Let $R$ be a local ring with maximal ideal $\fm$, essentially of finite type over a field $k$.
Let $e$ be the embedded dimension of $R$.
For any $\fm$-primary ideal $\fa$, denote by $\ell(R/\fa)$ the length of $R/\fa$.

\begin{lemma}\label{lem:alg}
For any finite collection of $\fm$-primary ideals $\fa_1,\dots,\fa_d \subset R$, we have
\[
d^{e-1} \textstyle{\sum_{i=1}^d} \ell(R/\fa_i) \ge \ell\bigl(R/(\fa_1\cdots\fa_d)\bigr),
\]
and the inequality is strict if $d\ge 2$ and $e \ge 2$.
\end{lemma}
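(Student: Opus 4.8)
The plan is to reduce the statement, via a chain of harmless degenerations, to a purely combinatorial inequality for monomial ideals in a polynomial ring, and then to prove that by induction on $e$ by ``slicing'' staircases along one coordinate. \textbf{Reductions to the monomial case.} Completing $R$ changes neither $e$ nor any of the colengths and is compatible with products, so we may assume $R$ complete. By Cohen's structure theorem there is a surjection $\varphi\colon S:=k'[[x_1,\dots,x_e]]\twoheadrightarrow R$ with $k'$ the residue field, $S$ regular local of embedding dimension $e$; putting $\widetilde{\fa}_i:=\varphi^{-1}(\fa_iR)$ one has $S/\widetilde{\fa}_i\cong R/\fa_i$ and $S/(\widetilde{\fa}_1\cdots\widetilde{\fa}_d)\twoheadrightarrow R/(\fa_1\cdots\fa_d)$, so $\ell(S/\widetilde{\fa}_i)=\ell(R/\fa_i)$ and $\ell(S/\widetilde{\fa}_1\cdots\widetilde{\fa}_d)\ge\ell(R/\fa_1\cdots\fa_d)$; thus it suffices to treat $S$. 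Now pass to the tangent cone: $\operatorname{gr}_\fm(S/\fa)\cong G/\fa^{*}$, where $G:=\operatorname{gr}_\fm S\cong k'[x_1,\dots,x_e]$ is a polynomial ring, so $\ell(G/\fa^{*})=\ell(S/\fa)$; and a Gröbner degeneration then replaces the homogeneous $\fa_i^{*}$ by monomial ideals. At each of these two steps the colength of a single ideal is preserved and the colength of the product can only increase, because passing to initial forms (resp.\ initial terms) is multiplicative in the domain $G$, so $\fa_1^{*}\cdots\fa_d^{*}\subseteq(\fa_1\cdots\fa_d)^{*}$, etc. Hence it is enough to prove: for proper $\fm$-primary \emph{monomial} ideals $M_1,\dots,M_d\subset G=k[x_1,\dots,x_e]$,
\[
\ell(G/M_1\cdots M_d)\ \le\ d^{\,e-1}\sum_{i=1}^{d}\ell(G/M_i),
\]
strictly if $d\ge2$ and $e\ge2$.

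\textbf{Slicing.}
Induct on $e$; for $e=1$ this is the equality $\ell(k[x]/(x^{c_1})\cdots(x^{c_d}))=c_1+\dots+c_d$. For $e\ge2$ and a monomial ideal $M\subset k[x_1,\dots,x_e]$, let $M_{(n)}\subset k[x_1,\dots,x_{e-1}]$ be the monomial ideal spanned by the monomials $u$ with $u\,x_e^{\,n}\in M$: it is nondecreasing in $n$, equals the unit ideal for $n\gg0$, and grouping the monomials outside $M$ by their $x_e$-exponent gives $\ell(k[x_1,\dots,x_e]/M)=\sum_{n\ge0}\ell(k[x_1,\dots,x_{e-1}]/M_{(n)})$. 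A short direct check shows that for all $m_1,\dots,m_d\ge0$ with $m_1+\dots+m_d=n$,
\[
(M_1\cdots M_d)_{(n)}\ \supseteq\ (M_1)_{(m_1)}\cdots(M_d)_{(m_d)} .
\]
For each $n$ I choose exponents $b_1(n),\dots,b_d(n)$ as balanced as possible: each equal to $\lfloor n/d\rfloor$ or $\lceil n/d\rceil$ and summing to $n$.

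\textbf{Conclusion of the induction.}
Set $g_i(m):=\ell(k[x_1,\dots,x_{e-1}]/(M_i)_{(m)})$; this is nonincreasing with $\sum_{m\ge0}g_i(m)=\ell(G/M_i)=:L_i$. Since $b_i(n)\ge\lfloor n/d\rfloor$ and each value of $\lfloor n/d\rfloor$ is attained for exactly $d$ integers $n$,
\[
\sum_{n\ge0}g_i\bigl(b_i(n)\bigr)\ \le\ \sum_{n\ge0}g_i\bigl(\lfloor n/d\rfloor\bigr)\ =\ d\sum_{m\ge0}g_i(m)\ =\ d\,L_i .
\]
Applying the inductive hypothesis in dimension $e-1$ to each product $\prod_i(M_i)_{(b_i(n))}$ — with fewer than $d$ factors on the slices where some $(M_i)_{(b_i(n))}$ is the unit ideal — gives
\[
\ell(G/M_1\cdots M_d)\ \le\ \sum_{n\ge0}\ell\Bigl(k[x_1,\dots,x_{e-1}]\big/{\textstyle\prod_i}(M_i)_{(b_i(n))}\Bigr)\ \le\ d^{\,e-2}\!\sum_{n\ge0}\sum_i g_i\bigl(b_i(n)\bigr)\ \le\ d^{\,e-1}\sum_i L_i .
\]
For $e\ge3$ the term $n=0$ in the middle inequality is already strict — there all the $(M_i)_{(0)}$ are proper $\fm$-primary ideals in $k[x_1,\dots,x_{e-1}]$ and $e-1\ge2$ — which yields strictness. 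The base case $e=2$ is checked by hand: then $(M_i)_{(m)}=(x_1^{\,s_m^{(i)}})$, so $g_i(m)=s_m^{(i)}$ and $\ell(G/M_1\cdots M_d)=\sum_n t_n$ with $t_n\le\min_{\sum_i m_i=n}\sum_i s_{m_i}^{(i)}$; averaging this minimum over the $\binom{d}{r}$ decompositions of $n=dk+r$ that use only the values $k$ and $k+1$ bounds $t_n\le\sum_i\bigl(\tfrac{d-r}{d}s_k^{(i)}+\tfrac{r}{d}s_{k+1}^{(i)}\bigr)$, and summing over $n$ yields $\ell(G/M_1\cdots M_d)\le d\sum_i L_i-\tfrac{d-1}{2}\sum_i s_0^{(i)}<d\sum_i L_i$ for $d\ge2$, since each $M_i$ is proper and hence $s_0^{(i)}\ge1$.

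\textbf{Where the difficulty lies.}
The soft reductions work because forming initial forms/terms is multiplicative \emph{only in a domain}; this is exactly why one must first descend to a regular local ring and then to its polynomial associated graded ring, and it is the one place where the hypothesis that $R$ contains a field is used. The remaining content is entirely combinatorial, and there the single genuine computation is the $e=2$ averaging estimate; strictness for $e\ge3$ then drops out through the $n=0$ slice, whose factors are always proper. The only bookkeeping subtlety is that individual slices $(M_i)_{(m)}$ can equal the unit ideal, so the inductive hypothesis must occasionally be applied with fewer factors — but never on the $n=0$ slice, so strictness is unaffected.
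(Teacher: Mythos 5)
Your proof is correct, but its combinatorial core is genuinely different from the one in the appendix. Both arguments begin with the same standard reduction to $\fm$-primary monomial ideals in $k[x_1,\dots,x_e]$ (Cohen's theorem plus a flat degeneration, using that passing to initial ideals preserves colengths and that $\prod_i \ini(\fa_i)\subseteq \ini(\prod_i\fa_i)$ in a ring whose associated graded is a domain); your extra detour through the tangent cone is harmless but not needed, since a local monomial order degenerates the power-series ideals to monomial ideals directly. From there the appendix treats the complements of the staircase regions $Q_i$ as Euclidean bodies: it forms the radial sum $Q'=\mathop{\bigstar}_i Q_i$, shows $Q'\subseteq Q$, and in spherical coordinates reduces the volume comparison to the pointwise power-mean inequality $d^{e-1}\sum_i r_i(\theta)^e\ge \bigl(\sum_i r_i(\theta)\bigr)^e$, i.e.\ H\"older; strictness is then obtained by noting that a.e.\ equality forces all $\fa_i$ equal and exhibiting a lattice point of $Q\smallsetminus Q'$. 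You instead slice the staircases along the last coordinate and induct on $e$, with the constant $d^{e-1}$ accumulating one factor of $d$ per slicing step through the balanced partitions $n=\sum_i b_i(n)$ and the monotonicity of the slice colengths; the only genuine computation is the $e=2$ averaging estimate, whose explicit defect $\tfrac{d-1}{2}\sum_i s_0^{(i)}$ supplies strictness, which then propagates to all $e\ge 3$ through the $n=0$ slice. I checked the key points — the slice containment $(M_1\cdots M_d)_{(n)}\supseteq \prod_i (M_i)_{(m_i)}$ for $\sum_i m_i=n$, the count $\sum_{n\ge 0} g_i(\lfloor n/d\rfloor)=d\,L_i$, the handling of slices where some factor becomes the unit ideal, and the coefficient bookkeeping in the $e=2$ case — and they all hold. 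What each approach buys: the appendix's proof is shorter and conceptually transparent (H\"older does all the work at once, uniformly in $e$), while yours is entirely elementary, avoiding volumes and integrals, and its strictness analysis is more self-contained, requiring no separate ``all ideals equal'' case.
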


\begin{proof}
By Cohen's structure theorem, there is a surjection $k[[x_1,\dots,x_e]] \to \^R$,
where $\^R$ is the $\fm$-adic completion of $R$. After taking the inverse image of the ideals $\fa_i\^R$
to $k[[x_1,\dots,x_e]]$ and restricting to $k[x_1,\dots,x_e]$, we reduce to prove the lemma when
$R = k[x_1,\dots,x_e]$ and $\fm = (x_1,\dots,x_e)$.
If we fix a monomial order which gives a flat degeneration to monomial ideals,
and denote by $\ini(\fa)$ the initial ideal of an ideal $\fa \subset R$,
then $\ell(R/\fa) = \ell(R/\ini(\fa))$ and
$\prod_{i=1}^d \ini(\fa_i) \subset \ini(\prod_{i=1}^d \fa_i)$.
We can therefore assume that each $\fa_i$ is monomial.

Let $\fa = \prod_{i=1}^d \fa_i$.
For $\bu = (u_1,\dots,u_e) \in \Z^e_{\ge 0}$, we denote $\bx^\bu = \prod_{j=1}^e x_j^{u_j}$.
Let
\[
Q_i = \bigcup_{\bx^\bu \in \fa_i}\(\bu + \R^e_{\ge 0}\)
\quad\text{and}\quad
Q = \bigcup_{\bx^\bu \in \fa}\(\bu + \R^e_{\ge 0}\).
\]
Notice that
$\ell(R/\fa_i) = \Vol\(\R^e_{\ge 0} \smallsetminus Q_i\)$
and
$\ell(R/\fa) = \Vol\(\R^e_{\ge 0} \smallsetminus Q\)$, where the volumes are computed
with respect to the Euclidean metric.
We consider the radial sum
\[
Q' = \mathop{\bigstar}_{i=1}^d Q_i :=
\bigcup_W \sum_{i=1}^d(Q_i \cap W)
\]
introduced in \cite{Fernex}:
the union runs over all rays $W \subset \R^e_{\ge 0}$,
and the sum appearing in the right-hand side is the usual sum of subsets of a vector space.

For every $\bv \in Q'$, we can find $\bv_i \in Q_i$
such that $\bv = \sum_{i=1}^d \bv_i$. For each $i$, we have $\bv_i \in \(\bu_i + \R^e_{\ge 0}\)$
for some $\bu_i \in \Z^e_{\ge 0}$ such that $\bx^{\bu_i} \in \fa_i$.
Then, setting $\bu = \sum_{i=1}^d \bu_i$, we have $\bx^\bu \in\fa$ and
$\bv \in \(\bu + \R^e_{\ge 0}\)$, and therefore $\bv \in Q$.
This means that $Q' \subset Q$, and hence
\begin{equation}\label{eq:1}
\Vol\(\R^e_{\ge 0} \smallsetminus Q'\) \ge \Vol\(\R^e_{\ge 0} \smallsetminus Q\).
\end{equation}
Then, to prove the inequality stated in the lemma, it suffices to show that
\begin{equation}\label{eq:2}
d^{e-1}\(\sum_{i=1}^d \Vol\(\R^e_{\ge 0} \smallsetminus Q_i\)\) \ge \Vol\(\R^e_{\ge 0} \smallsetminus Q'\).
\end{equation}
To this end, we fix spherical coordinates
$(\theta,\r) \in S \times \R_{\ge 0}$ where $S$ is the intersection of the unit sphere with
$\R^e_{\ge 0}$. For any $\theta \in S$, we define
$r_i(\theta) = \inf \{ \r \mid (\theta,\r) \in Q_i \}$
and $r(\theta) = \inf \{ \r \mid (\theta,\r) \in Q' \}$.
By the definition of $Q'$, we have $r(\theta) = \sum_{i=1}^d r_i(\theta)$.
We have
\[
\Vol(\R^e_{\ge 0} \smallsetminus Q_i)
= \int_S \int_0^{r_i(\theta)} \r^{e-1} \,d\r \,\omega(\theta)
=\int_S \frac{r_i(\theta)^e}{e} \,\omega(\theta)
\]
and
\[
\Vol(\R^e_{\ge 0} \smallsetminus Q')
= \int_S\int_0^{r(\theta)}\r^{e-1}\,d\r\,\omega(\theta)
= \int_S \frac{r(\theta)^e}{e}\,\omega(\theta)
\]
for some volume form $\omega$ on $S$.
Then the desired inequality follows from
\begin{equation}\label{eq:3}
d^{e-1}\sum_{i=1}^d r_i(\theta)^e \ge r(\theta)^e,
\end{equation}
which follows from H\"older's inequality.

To conclude, we show that the inequality is strict
if $d \ge 2$ and $e \ge 2$. First observe that \eqref{eq:2}
is a strict inequality unless \eqref{eq:3} is an equality for almost all $\theta \in S$, which
can only happen if $\fa_i = \fa_1$ for every $i$.
Suppose this is the case, so that $\fa = \fa_1^d$.
Notice that in this case $Q'$ is a polyhedron.
Let $a,b$ be the smallest integers
such that $x_1^a \in \fa_1$ and $x_1^{a'}x_2^b \in \fa_1$ for some $a' < a$.
Then $x_1^{(d-1)a + a'}x_2^b \in \fa$, and hence the vector $\bv = ((d-1)a + a',b,0,\dots,0)$ belongs to $Q$.
Note, on the contrary, that $\bv$ is not in $Q'$. Hence $Q' \subsetneq Q$, and since
these sets are polyhedra, it follows that \eqref{eq:1} is a strict inequality.
Therefore the inequality stated in the lemma, which follows as a combination of \eqref{eq:1} and
\eqref{eq:2}, is strict.
\end{proof}

Princeton University, Princeton NJ 08544-1000

\email{kollar@math.princeton.edu}
\medskip

A. R\'enyi Institute of Mathematics, 1053 Budapest, Re\'altanoda u. 13-15, Hungary.

\email{nemethi.andras@renyi.mta.hu}

\medskip
University of Utah, Salt Lake City, UT 08140

\email{defernex@math.utah.edu}


\begin{thebibliography}{99}

\bibitem{AGV} V.I.~Arnold,  S.M.~Gussein--Zade, A.N.~Varchenko, \textit{Singularities of differentiable
mappings. II.}, ``Nauka'', Moscow, 1984.

\bibitem{AGLV} V.I. Arnol'd, V.V. Goryunov, O.V. Lyashko, V.A. Vasil'ev, {\em
Singularity theory.I.} Reprint of the original English edition
from the series Encyclopaedia of Mathematical Sciences [Dynamical
systems. VI, Encyclopaedia Math. Sci., 6, Springer, Berlin, 1993].
Springer-Verlag, Berlin, 1998.


\bibitem{Ashikaga92} T. Ashikaga, {\em Normal two-dimensional hypersurface triple points
and the Horikawa type resolution.} Tohoku Math. J. (2) 44 (1992), no. 2, 177--200.



\bibitem{bennett} B. M. Bennett, {\it Normally flat deformations.}
Trans. of the AMS 225 (1977), 1--57.

\bibitem{Durfee1978} A.H. Durfee, {\it The signature of smoothings of complex surface singularities.}
Math. Ann. 232 (1978), no. 1, 85--98.


\bibitem{Fernex} T. de Fernex, {\it Length, multiplicity, and multiplier ideals}, Trans. AMS  358(8)
(2006), 3717-3731.

\bibitem{Elkies} N.D. Elkies, {\em A characterization of the ${\mathbb Z}^n$ lattice,}
Math. Res. Letters {\bf 2} (1995), 321-326.


\bibitem{Greuel1975} G.M. Greuel, {\em Der Gauss-Manin-Zusammenhang isolierter Singularit\"{a}ten von
vollst\"{a}ndigen Durchschnitten.} Math. Ann. 214 (1975), 235-266.

\bibitem{Greuel-Hamm1978} G.M. Greuel, H.A. Hamm, {\em Invarianten quasihomogener vollst\"{a}ndiger
Durchschnitte.} Invent. Math. 49 (1978), no. 1, 67-86.


\bibitem{Hamm1986} H.A. Hamm, {\em Invariants of weighted homogeneous singularities.} Journ\'{e}es Complexes 85
(Nancy, 1985), 613, Inst. \'{E}lie Cartan, 10, Univ. Nancy, Nancy, 1986.

\bibitem{Hamm2011} H.A. Hamm, {\em Differential forms and Hodge numbers for toric complete intersections},
 arXiv:1106.1826.




\bibitem{Kerner-Nemethi2009} D. Kerner and A. N\'emethi,
{\em The Milnor fibre signature is not semi-continuous},
Proc. of the Conference in Honor of the 60th Birthday of A. Libgober,
Topology of Algebraic Varieties, Jaca (Spain),  June 2009;
 Contemporary Math. 538 (2011), 369--376.

\bibitem{Kerner-Nemethi.a} D. Kerner and A. N\'emethi,
{\em A counterexample to Durfee conjecture},
Math. Reports of the Acad. of Soc., The Royal Soc. of Canada, 34 (2) (2012), 50--64.

\bibitem{Kerner-Nemethi.b}  D. Kerner and A. N\'emethi,
{\em The 'corrected Durfee's inequality' for homogeneous complete intersections},
Math. Zeitschrift  274, Issue 3-4 (2013), 1385-1400.


\bibitem{Khovanskii1978} A.G. Khovanskii,
{\em Newton polyhedra, and the genus of complete intersections.}
 (Russian) Funktsional. Anal. i Prilozhen. 12 (1978), no. 1, 51-61.

\bibitem{Laufer1977} H.B. Laufer, {\em  On $\mu$ for surface singularities,}
 Proceedings of Symposia in Pure Math.  30, 45-49,  1977.


\bibitem{Laumult} H.B. Laufer,
{\it The multiplicity of isolated two--dimensional
hypersurface singularities},  Trans. AMS 302 (2), 489-496, 1987.

\bibitem{Looijenga-book} E. Looijenga, {\em Isolated Singular Points on Complete Intersections.}
 London Math. Soc. LNS 77, CUP, 1984.

\bibitem{Looijenga1986} E. Looijenga, {\em Riemann-Roch and smoothings of singularities.}
 Topology 25 (1986), no. 3, 293--302.


\bibitem{Morales1985}M. Morales, {\em Fonctions de Hilbert, genre g\'{e}om\'{e}trique
d'une singularit\'{e} quasi homog\`{e}ne Cohen-Macaulay.} C. R. Acad. Sci. Paris S\'{e}r. I Math. 301 (1985),
no. 14, 699--702.


\bibitem{Melle00}A. Melle-Hern\'{a}ndez, {\it Milnor numbers for surface singularities.}
  Israel J. Math.  115  (2000), 29--50.

\bibitem{Milnor-book} J. Milnor, {\em Singular points of complex hypersurfaces,}
 Annals of Math. Studies  61, Princeton University Press 1968.

\bibitem{mumf-top} D.~Mumford,
   {\em The topology of normal singularities of an algebraic surface
              and a criterion for simplicity},
  Inst. Hautes \'Etudes Sci. Publ. Math. 9 (1961), 5--22.


\bibitem{Nemethi98} A. N\'emethi, {\em Dedekind sums and the signature of $f(x,y)+z^N$}.
Selecta Math. (N.S.) 4 (1998), no. 2, 361--376.

\bibitem{Nemethi99} A. N\'emethi,    {\em  Dedekind sums and the signature of $f(x,y)+z^N$,II.}
Selecta Math. (N.S.)  5 (1999), 161--179.

\bibitem{Nemethi1998}  A. N\'emethi,  {\em On the spectrum of curve singularities},
 Proceedings of the Singularity Conference, Oberwolfach, July 1996;
Progress in Mathematics 162, 93-102, Birkh\"auser 1998.


\bibitem{NJEMS}  A. N\'emethi,  {\em The Seiberg--Witten invariants of negative definite plumbed 3--manifolds},
 Journal of EMS {\bf 13}(4) (2011), 959--974.

\bibitem{OS} B. Owens and S. Strle, {\em A Characterization of the ${\mathbb Z}^n\oplus {\mathbb Z}(\delta)$
lattice and definite nonunimodular intersections forms},
Amer. J. Math. 134 (2012), no. 4, 891--913.



\bibitem{Steenbrink1983}J.H.M. Steenbrink, {\em Mixed Hodge structures associated with isolated
singularities.} Singularities, Part 2 (Arcata, Calif., 1981), 513--536,
Proc. Sympos. Pure Math., 40, Amer. Math. Soc., Providence, RI, 1983.

\bibitem{Tomari93} M. Tomari, {\em The inequality $8p_g<\mu$ for hypersurface two-dimensional
isolated double points.} Math. Nachr. 164 (1993), 37--48.


\bibitem{Wael} Ph. Wagreich, {\it  Elliptic singularities of surfaces},  Amer. J. of Math.
92 (1970), 419-454.

\bibitem{Wahl1981}J. Wahl, {\em Smoothings of normal surface singularities}, Topology 20 (1981), 219--246.

\bibitem{Xu-Yau1993}Y.-J. Xu, S.S.-T. Yau, {\em Durfee conjecture and coordinate free characterization of
homogeneous singularities.} J. Differential Geom. 37 (1993), no. 2, 375--396.

\bibitem{Yau-Zhang2006}St.-T.Yau, L.Zhang, {\em An upper estimate of integral points in real simplices with
an application to singularity theory.} Math. Res. Lett. 13 (2006), no. 5--6, 911--921.
\end{thebibliography}
\end{document}